\newtheorem{theorem}{Theorem}[section]
\newtheorem{lemma}[theorem]{Lemma}
\newtheorem{corollary}[theorem]{Corollary}
\newtheorem{definition}{Definition}
\newtheorem{proposition}[theorem]{Proposition}
\newtheorem{example}[theorem]{Example}
\newtheorem{remark}[theorem]{Remark}
\newcommand{\F}{{\mathbb F}}
\newcommand{\Z}{{\mathbb Z}}
\newcommand{\fq}{{\mathbb F}_{q}}
\newcommand{\PG}{\mathrm{PG}}
\newcommand{\ZZZ}{\mathcal{Z}}
\newcommand{\rad}{\mathrm{rad}_q}
\title[$k$-Blocking Sets and Power Residue]{Blocking Sets and Power Residue Modulo Integers with bounded number of prime factors}
\author[B. Mishra]{Bhawesh Mishra} \author[P. Santonastaso]{Paolo Santonastaso}
\date{}
\begin{document}

\begin{abstract}
Let $q$ be an odd prime and $k$ be a natural number. We show that a finite subset of integers $S$ that does not contain any perfect $q^{th}$ power, contains a $q^{th}$ power residue modulo almost every natural numbers $N$ with at most $k$ prime factors if and only if $S$ corresponds to a $k$-blocking set of $\PG(\mathbb{F}_{q}^{n})$. Here, $n$ is the number of distinct primes that divides the $q$-free parts of elements of $S$. 
Consequently, this geometric connection enables us to utilize methods from Galois geometry to derive lower bounds for the cardinalities of such sets $S$ and to completely characterize such $S$ of the smallest and the second smallest cardinalities. Furthermore, the property of whether a finite subset of integers contains a $q^{th}$ power residue modulo almost every integer $N$ with at most $k$ prime factors is invariant under the action of projective general linear group $\mathrm{PGL}(n, q)$.
\end{abstract}

\maketitle

\noindent
\textbf{Keywords:} Prime Power Residue; Blocking Set; Local-to-Global Principle.\\
\textbf{MSC2020:}  51E21; 05B25; 11A15.

\section{Introduction}

\subsection{Motivation}
Let $q$ be a prime. We will say that a subset $S$ of integers contains a $q^{th}$ power modulo almost every prime $p$ if and only if for cofinitely many primes $p$, the congruence \[x^{q} \equiv s \hspace{1mm} (\text{mod } p)\] has a solution $x \in\mathbb{Z}$ and $s \in S$. If a set $S$ already contains an integer $q^{th}$ power, then it trivially contains a $q^{th}$ power modulo almost every prime. The interesting case is when $S$ does not contain an integer $q^{th}$ power and yet contains a $q^{th}$ power modulo almost every prime. Every such $S$ constitutes an instance of failure of the local-to-global principle in number theory (see \cite[pp. 99-108]{Gou} for more details on the local-to-global principle).

The study of finite subsets of integers that contain a $q^{th}$ power residue modulo almost every prime, has a long and fruitful history. For instance, Fried first obtained a characterization of such subsets $S$ for $q = 2$ in \cite{fried1969arithmetical}, and the same result later also appeared in a work of Filaseta and Richman \cite{filaseta1989sets}. The analogous result for general $q^{th}$ powers was obtained by A. Schinzel and M. Skałba in \cite{SS} and is combinatorially quite complex in nature. The result in \cite{SS} also deals with a more general problem over general number fields. We refer the readers to \cite[Theorems 1, 2]{SS} for the results obtained by Schinzel and Skałba. When $q$ is an odd prime power, the results from \cite{SS} were further simplified by Skałba in \cite{skalba2005sets}. 

In the case, when $q$ is an odd prime, Skałba's characterization was further refined in \cite{mishra2023prime} by showing that sets $S$ that contain a $q^{th}$ power modulo almost every prime are in correspondence with \textit{linear covering} of suitably defined vector spaces.  It is also worth mentioning that power residue problems have also been investigated over abelian varieties \cite{wong2000power} and elliptic curves \cite{skalba2004power}. Furthermore, this line of inquiry into power residues has recently yielded interesting and fruitful connections to the theory of intersective polynomials \cite{mishra2024polynomials} and in the generalization of the Grunwald-Wang theorem from one rational to subsets of rationals \cite{mishra2024generalization}.

In this article, we establish that finite subsets $S$ of integers that contain $q^{th}$ power residue modulo almost every integer of the form $p_{1}p_{2} \cdots p_{k}$ in a non-trivial way, are in correspondence with $k$-blocking sets in $\PG(\mathbb{F}_{q}^{n})$. Here, $n$ is the number of distinct primes that divide $q$-free parts of elements of $S$. This is the first instance known to the authors when a number-theoretic phenomenon, i.e. failure of a certain local-to-global principle for prime powers, is equivalent to another phenomenon in finite geometry, i.e. existence of blocking sets. This connection enables us to establish lower bounds for cardinality of such sets $S$, classify such $S$ for which $|S|$ is the lowest (and the second lowest) and specify an action under which the above property of a finite subset $S$ of integers, is invariant. We will first introduce some preliminary concepts and notations.

\subsection{Identifying Finite Subsets $S$ with $\mathbb{F}_{q}^{n}$}
\label{ProjectiveIdentification}
From now onwards, $q$ will always denote an odd prime. We will say that a rational $s$ is a perfect $q^{th}$ power when $s = r^{q}$ for some $r \in\mathbb{Q}$. The set of non-zero rationals will be denoted by $\mathbb{Q}^{\times}$. A positive integer $s$ will be called $q$-free when $p^{q} \nmid s$ for any prime $p$. Given a prime $p$ and an integer $n$, we will say that $p^{a} \mid\mid n$ for some $a \geq 0$, when $p^{a}$ is the highest power of $p$ that divides $n$. 

\subsubsection{Reduction to positive $q$-free numbers}
Let $S = \{s_{j}\}_{j=1}^{\ell}$ be a finite subset of integers, that does not contain any perfect $q^{th}$ power and we are interested in studying whether $S$ contains a $q^{th}$ power residue. Since $-1$ is always a perfect $q^{th}$ power, an integer $s$ is a $q^{th}$ power (modulo any integer $m$) if and only if $|s|$ is so. Therefore, as long as $q$ is odd, it suffices to study $\{\lvert s_{j} \rvert\}_{j=1}^{\ell}$ instead of $S$ itself. 

Given a positive integer $r$ with unique factorization $\prod_{i=1}^{\mu} p_{i}^{a_{i}}$, we define \[ \text{rad}_{q}(r) := \prod_{i=1}^{\mu} p_{i}^{a_{i} \hspace{1mm} (\text{mod } q)}, \] which is the $q$-free part of the natural number $r$. Note that, an integer $s$ is a $q^{th}$ power modulo a prime $p \nmid b$ if and only if the integer $s \cdot b^{q}$ is so too. Therefore, as long as we are concerned with $q^{th}$ power residue modulo cofinitely many primes, we can study the set  $\{\text{rad}_{q}\left(\lvert s_{j} \rvert\right)\}_{j=1}^{\ell}$ in place of $S$. This is because there are only finitely many primes that may divide some element in $S = \{s_{j}\}_{j=1}^{\ell}$ but do not divide any element in $\{\text{rad}_{q}\left(\lvert s_{j} \rvert\right)\}_{j=1}^{\ell}$.

\subsubsection{Identification with $\mathbb{F}_{q}^{n}$} We will use $\mathbb{F}_{q}$ to denote finite field with $q$ elements. For a vector space $V$ over $\mathbb{F}_{q}$, we will use $\PG(V)$ to denote the projective space generated by $V$. If $W$ is a $(k+1)$-dimensional subspace of $V$, then we will say that $\PG(W)$ is a $k$-space of $\PG(V)$. 

Given a finite subset $S = \{s_{j}\}_{j=1}^{\ell}$ of integers not containing any perfect $q^{th}$ power, let $p_{1} < p_{2} < \ldots < p_{n}$ be all the distinct primes that divides $\prod_{j=1}^{\ell}\text{rad}_{q}(|s_{j}|)$. For every $1 \leq j \leq \ell$ and every $1 \leq i \leq n$, let $a_{ij} \geq 0$ such that $p_{i}^{a_{ij}} \mid\mid \text{rad}_{q}(|a_{j}|)$. Then, we can identify every element of $S$ with an element in $\mathbb{F}_{q}^{n}$ through the map \[ \pi_{q} : S \longrightarrow  \mathbb{F}_{q}^{n}\setminus\{0\}, \] where $\pi_{q}(s_{j}) = \left( a_{ij} \right)_{i=1}^{n}$ for every $ 1 \leq j \leq \ell$. In this way, we can associate the set $S$ with a set of points \[ \Big\{ \langle (a_{11}, a_{21}, \ldots, a_{n1} )\rangle, \langle (a_{12}, a_{22}, \ldots, a_{n2} )\rangle, \ldots, \langle (a_{1\ell}, a_{2\ell}, \ldots, a_{n\ell} )\rangle \Big\} \subseteq\PG(\mathbb{F}_{q}^{n}), \] which we will call \textit{the set of projective points associated with $S$}.  

Furthermore, we say that a subset $T \subset S$ is \textit{$\mathbb{F}_{q}$-linearly independent} if and only if the set $\pi_{q}(T)$ is a $\mathbb{F}_{q}$-linearly independent subset of $\mathbb{F}_{q}^{n}$. We will say that a subset $T$ of integers generates a subspace $V$ of $\mathbb{F}_{q}^{n}$ when $\pi_{q}(T)$ generates the subspace $V$ of $\mathbb{F}_{q}^{n}$.

\subsection{Our contribution}
Rather than considering $q^{th}$ power residue modulo almost every prime only, in this paper we consider $q^{th}$ power modulo almost every integer that have at most $k$ (not necessarily distinct) prime factors. Let $\mathcal{P}_{f}(\mathbb{Z})$ denote the family of finite subsets of $\mathbb{Z}$. Given an odd prime $q$ and a natural number $k$, we define
\[ \mathcal{T}_{k, q} := \Big\{ S \in\mathcal{P}_{f}(\mathbb{Z}) : S \text{ contains a $q^{th}$ power modulo almost every integer $N$ with } \Omega(N) \leq k \Big\}. \]

The phrase \textit{almost every integer} and the quantity $\Omega(N)$ are made precise through the following definitions. 

\begin{definition}
Given a natural number $r > 1$ with unique prime factorization $r = \prod_{i=1}^{\mu} p_{i}^{a_{i}} $, we define the $\Omega(r)$ to be the quantity $\sum_{i=1}^{\mu} a_{i}$. In other words, $\Omega$ is the prime-factor counting function that honors multiplicities.
\end{definition}

\begin{definition}
Let $k$ be a natural number. We say that a finite set $S$ of integers contains a $q^{th}$ power modulo almost every natural number $N$ with $\Omega(N) \leq k$ when the following holds:
\begin{center}
There exists a natural number $\Delta$ (depending upon $S$) such that for every natural number $N$ with $\Omega(N) \leq k$ and $\text{gcd }(N, \Delta) = 1$, $S$ contains a $q^{th}$ power modulo $N$. \label{Delta}
\end{center}
\end{definition}

In this article, once the set $S = \{s_{j}\}_{j=1}^{\ell}$ of integers are fixed, the quantity $\Delta$ is of the form $(-q)^{\mu_{0}} \prod_{j=1}^{\ell} s_{j}^{\mu_{j}}$, where $\mu_{0}, \mu_{1}, \ldots, \mu_{\ell}$ are all natural numbers. Although not needed in this article, interested readers can consult \cite[pp. 5]{mishra2022polynomials} for an explicit formula for $\Delta$ in terms of $q$ and $\{s_{j}\}_{j=1}^{\ell}$ Therefore, the only exceptional natural numbers $N$ in the definition above are the ones that are either divisible by $q$ or share a common prime factors with $s_{j}$'s. This leads to the following remark. 

\begin{remark}
In light of the comment above, for $k = 1$ the phrase \textbf{almost every prime} is equivalent to \textbf{cofinitely many primes}. This is because the exceptional primes are the ones that divide $q\prod_{j=1}^{\ell} s_{j}$. However, the phrase \textbf{almost every} does not imply \textbf{cofinitely many} in the case $k > 1$ due to the fact that there are infinitely many integers $N$ with $\Omega(N) \leq k$ that can share a prime factor with $q\prod_{j=1}^{\ell} s_{j}$. 
Furthermore, for $k = 1$, $S \in \mathcal{T}_{1,q}$ is equivalent to $S$ containing a $q^{th}$ for all powers of prime $p$, except for co-finitely exceptional primes $p$. Even for the exceptional primes $p$, one only needs to check up to a finite power $p^{k}$ that depends upon $S$. Both of the previous sentences are a consequence of the Hensel's lemma for the polynomial $\prod_{s \in S} (x^{q} - s)$. Therefore, the crucial difference between $S \in\mathcal{T}_{1,q}$ versus $S \in \mathcal{T}_{2,q}$ comes down to a set $S$ containing a $q^{th}$ power modulo $p_{1}p_{2}$ for all distinct primes $p_{1}$ and $p_{2}$. So, $S$ being in $\mathcal{T}_{k,q}$ is a stronger condition than $S \in \mathcal{T}_{1,q}$ as studied in \cite{SS, skalba2005sets}.
\end{remark}

The sets in the family $\mathcal{T}_{k,q}$ were first studied in \cite{skalba2004alternatives} by Skałba, primarily with the goal of studying the lower bound on the cardinality of sets $S \in\mathcal{T}_{k,q}$. We extend this line of inquiry with a Galois-geometric characterization of sets $S\in\mathcal{T}_{k,q}$ which leads to a host of other structural and classification results that are not available otherwise. 

\begin{definition}
Let $n > k \geq 1$. A subset $\mathcal{S}\subseteq\PG(\mathbb{F}_{q}^{n})$ is said to be a \textbf{$k$-blocking set} if given every subspace $W$ of $\mathbb{F}_{q}^{n}$ with codimension $k$, one has $\PG(W) \cap \mathcal{S} \neq\emptyset$.
\label{defnblocking} 
\end{definition}
The main result in this article is the following correspondence between the elements of $\mathcal{T}_{k,q}$ and $k$-blocking sets. 

\begin{theorem} 
Let $q$ be an odd prime and $k$ be a natural number. Let $S = \{s_{j}\}_{j=1}^{\ell}$ be a finite subset of integers not containing any perfect $q^{th}$ power and $n$ be the number of distinct primes that divide $\prod_{j=1}^{\ell} \text{rad}_{q}(\lvert s_{j} \rvert)$ Then, the following two statements are equivalent:
\begin{enumerate}
    \item The set $S$ belongs to the collection $\mathcal{T}_{k,q}$. 

    \item The set of projective points associated with $S$ is a $k$-blocking set of $\PG(\mathbb{F}_{q}^{n})$. 
\end{enumerate}
\label{mainresult}
\end{theorem}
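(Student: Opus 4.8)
The plan is to set up an exact dictionary between the arithmetic of $q^{th}$-power residues and the incidence geometry of $\PG(\mathbb{F}_q^n)$, and then read off both implications from it. Throughout we may assume $n>k$, which is the range in which Definition~\ref{defnblocking} is stated.

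\emph{Step 1 (local dictionary at one prime).} Put $\Delta_0:=q\prod_{j=1}^{\ell}s_j$ and consider only primes $p\nmid\Delta_0$, so that $p\neq q$ and $p$ divides no $s_j$. If $p\not\equiv 1\pmod q$, then $\gcd(q,p-1)=1$ and every integer prime to $p$ is a $q^{th}$ power mod $p$, so $p$ imposes no condition. If $p\equiv 1\pmod q$, fix a generator $g$ of $\mathbb{F}_p^{\times}$ and let $c(p)=(c_1,\dots,c_n)\in\mathbb{F}_q^n$, where $c_i$ is the class modulo $q$ of the discrete logarithm $\log_g p_i$. Writing $|s_j|=u_j^{\,q}\cdot\text{rad}_q(|s_j|)$ with $p\nmid u_j$, and using that $-1=(-1)^q$ is a $q^{th}$ power since $q$ is odd, one finds that $s_j$ is a $q^{th}$ power mod $p$ if and only if $a_{1j}c_1+\cdots+a_{nj}c_n\equiv 0\pmod q$; equivalently, $\pi_q(s_j)$ lies on the hyperplane $H_p\subseteq\mathbb{F}_q^n$ with coefficient vector $c(p)$ (replacing $g$ multiplies $c(p)$ by a nonzero scalar, so $H_p$ is well defined; we set $H_p=\mathbb{F}_q^n$ when $c(p)=0$). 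Finally, Hensel's lemma applied to $x^q-s_j$, whose derivative $qx^{q-1}$ is a unit mod $p$ at any root, upgrades this to: for every $m\geq 1$, $s_j$ is a $q^{th}$ power mod $p^m$ iff $\pi_q(s_j)\in H_p$.

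\emph{Step 2 (realizing all hyperplanes).} The second ingredient is that every hyperplane of $\PG(\mathbb{F}_q^n)$ arises as $H_p$ for infinitely many primes $p$. Since $p_1,\dots,p_n$ are distinct rational primes they are multiplicatively independent modulo $q^{th}$ powers, so the Kummer extension $\mathbb{Q}(\zeta_q,p_1^{1/q},\dots,p_n^{1/q})/\mathbb{Q}(\zeta_q)$ has Galois group $(\mathbb{Z}/q\mathbb{Z})^n$; by Chebotarev's density theorem the Frobenius data is equidistributed, and translating it through Step~1 shows that for every nonzero $c\in\mathbb{F}_q^n$ there is a positive density of primes $p$ with $H_p=\{v:v\cdot c=0\}$. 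This is precisely the arithmetic input underlying the results of \cite{SS,skalba2005sets,mishra2023prime,skalba2004alternatives}. Consequently, given any hyperplanes $H^{(1)},\dots,H^{(r)}$ and any finite set of primes to avoid, one can choose \emph{distinct} primes $q_1,\dots,q_r$, all $\equiv 1\pmod q$ and outside the avoided set, with $H_{q_i}=H^{(i)}$.

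\emph{Step 3 (assembly).} Let $\mathcal{S}$ be the set of projective points associated with $S$; note $\pi_q(s_j)\neq 0$ since $s_j$ is not a perfect $q^{th}$ power. For an integer $N$ with $\gcd(N,\Delta_0)=1$ and distinct prime factors $q_1,\dots,q_m$, the Chinese Remainder Theorem together with Step~1 gives that $S$ contains a $q^{th}$ power mod $N$ if and only if $\pi_q(s_j)\in W_N:=\bigcap_{i} H_{q_i}$ for some $j$, and $\mathrm{codim}\,W_N\leq\#\{i:q_i\equiv 1\bmod q\}\leq m\leq\Omega(N)$. Take $\Delta=\Delta_0$. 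For $(2)\Rightarrow(1)$: if $\mathcal{S}$ is a $k$-blocking set and $\Omega(N)\leq k$ with $\gcd(N,\Delta)=1$, then $\mathrm{codim}\,W_N\leq k$, so (using $n>k$) $W_N$ contains a subspace $W'$ of codimension exactly $k$; since $\mathcal{S}\cap\PG(W')\neq\emptyset$ there is $j$ with $\pi_q(s_j)\in W'\subseteq W_N$, so $S$ contains a $q^{th}$ power mod $N$, i.e. $S\in\mathcal{T}_{k,q}$. For $(1)\Rightarrow(2)$: given a codimension-$k$ subspace $W$, write $W=\bigcap_{i=1}^k\{v:v\cdot c^{(i)}=0\}$ with $c^{(1)},\dots,c^{(k)}$ linearly independent, and by Step~2 pick distinct primes $q_1,\dots,q_k\nmid\Delta$ with $H_{q_i}=\{v:v\cdot c^{(i)}=0\}$; then $N=q_1\cdots q_k$ has $\Omega(N)=k$ and $\gcd(N,\Delta)=1$, so $S$ contains a $q^{th}$ power mod $N$, hence some $\pi_q(s_j)\in W_N=W$ with $\pi_q(s_j)\neq 0$, so $\langle\pi_q(s_j)\rangle\in\PG(W)\cap\mathcal{S}$. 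Thus $\mathcal{S}$ is a $k$-blocking set.

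\emph{Expected main obstacle.} The geometric bookkeeping in Step~3 is routine; the substance is in Steps~1 and~2 — establishing, uniformly in $p$, the equivalence between ``$s_j$ is a $q^{th}$ power mod $p$'' and a linear condition (correctly handling the $q$-free reduction, the sign, and the generator-dependence that forces the projective rather than affine formulation), and then invoking the Kummer-theory/Chebotarev input to guarantee that every hyperplane, and enough distinct primes realizing a prescribed family of them simultaneously, actually occurs.
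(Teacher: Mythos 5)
Your proof is correct, and its skeleton is the same as the paper's: both directions reduce to realizing prescribed $\mathbb{F}_q$-linear conditions on $\pi_q(S)$ by suitable primes, via an equidistribution theorem. The difference is in the vehicle. The paper works with the $q^{th}$ power residue symbol in $\mathbb{Q}(\zeta_q)$ and quotes Narkiewicz's theorem (its Proposition~\ref{characterresidue}) to prescribe the symbols $\bigl(\tfrac{s_i}{\mathfrak{p}}\bigr)_q$ at an $\mathbb{F}_q$-independent subset of $S$, packaging this as Lemma~\ref{characterlemma}: every character of the subspace $V$ generated by $\pi_q(S)$ is induced by infinitely many degree-one primes. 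You instead build the dictionary downstairs, via discrete logarithms at rational primes $p\equiv 1\pmod q$, and realize every hyperplane of $\mathbb{F}_q^n$ by applying Chebotarev to the Kummer extension generated by $q^{th}$ roots of the primes $p_1,\dots,p_n$ rather than of the elements of $S$; the paper's own remark after Lemma~\ref{characterlemma} acknowledges this Chebotarev route as an equivalent alternative. What your version buys is self-containedness and explicitness: in particular your Step~1 and Step~3 spell out the Hensel and CRT reductions needed to pass from general moduli $N$ with $\Omega(N)\le k$ (which may have repeated prime factors or prime factors $\not\equiv 1 \pmod q$) to an intersection of at most $k$ hyperplanes -- a point the paper's proof of $(2)\Rightarrow(1)$ treats rather tersely. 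What the paper's version buys is brevity, by outsourcing the analytic input to a single citation. One small presentational gap: you assume $n>k$ throughout, but for the stated equivalence one must also know that $(1)$ forces $n\ge k+1$ so that statement $(2)$ is meaningful; the paper proves this separately as Proposition~\ref{numberofprimes}. Your own machinery yields it in one line (if $n\le k$, take $N$ a product of $n$ distinct primes realizing $n$ independent hyperplanes, so $W_N=\{0\}$ while $\Omega(N)\le k$ and no $\pi_q(s_j)$ vanishes), so you should state that step explicitly rather than assuming the range.
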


This connection with $k$-blocking sets allows us to employ techniques from Galois geometry to investigate sets in $\mathcal{T}_{k,q}$. More precisely, we prove that the property of whether a finite subset $S$ of integers contains a $q^{th}$ power residue modulo almost every integer $N$ with at most $k$ prime factors is invariant under the action of projective general linear group $\mathrm{PGL}(n, q)$. Moreover, we $(i)$ establish lower bounds on cardinalities of sets in $\mathcal{T}_{k,q}$, $(ii)$ characterize sets in $\mathcal{T}_{k,q}$ achieving this lower bound and $(iii)$ construct some minimal sets in $\mathcal{T}_{k,q}$ of second smallest size for every odd prime $q$ and every $k \geq 2$.

\section{Some Preliminary Results} \ref{mainresult} 
Before we dive into the proofs, we will need the power residue symbol and some of its elementary properties. Let $K$ be a number field that contains the complex $q^{th}$ root of unity $\zeta_{q}$ and $\mathcal{O}_{K}$ be its ring of integers. Then, for every prime ideal $\mathfrak{p}$ of $K$ coprime to $q\mathcal{O}_{K}$ and every $\mathfrak{p}$-adic unit $\alpha \in K$, we define the $q^{th}$ power residue symbol $\left(\frac{\alpha}{\mathfrak{p}}\right)_{q}$ to be the unique $q^{th}$ root of unity $\zeta_{q}^{j}$ such that 
\[\alpha^{\frac{\text{Norm}(\mathfrak{p})-1}{q}} \equiv \zeta_{q}^{j} \hspace{1mm} (\text{mod } \mathfrak{p}).\] Whenever $\alpha, \beta$ are two $\mathfrak{p}$-adic unit, the power residue symbol obeys the multiplicative relation \[  \left(\frac{\alpha\beta}{\mathfrak{p}}\right)_{q} = \left(\frac{\alpha}{\mathfrak{p}}\right)_{q} \left(\frac{\beta}{\mathfrak{p}}\right)_{q}. \] We extend the power residue symbol for a non-prime ideal as follows: if $\mathfrak{a} = \mathfrak{p}_{1} \cdot \mathfrak{p}_{2} \cdots \mathfrak{p}_{s}$, we define \[  \left(\frac{\alpha}{\mathfrak{a}}\right)_{q} = \prod_{i=1}^{s}  \left(\frac{\alpha}{\mathfrak{p}_{i}}\right)_{q}  \text{ for every } \alpha \text{ coprime to } \mathfrak{a}. \] 

\begin{remark}
  Given a prime $\mathfrak{p}$ of $K$, $p = \mathfrak{p} \cap \mathbb{Z}$ is a prime in $\mathbb{Z}$. The (inertial) degree of $\mathfrak{p}$ is defined to be the index of $\mathbb{Z}/p\mathbb{Z}$ in $\mathcal{O}_{K}/\mathfrak{p}\mathcal{O}_{K}$. Let $\mathfrak{p}$ be a prime of $K$ of degree $1$ and $\alpha$ be an element of $K$ such that the power residue symbol $\left( \frac{\alpha}{\mathfrak{p}} \right)_{q}$ is defined. If $\left( \frac{\alpha}{\mathfrak{p}} \right)_{q} \neq 1$, then $\alpha$ is not a $q^{th}$ power modulo $\mathfrak{p}$ in $K$ and hence $\alpha$ is not a $q^{th}$ power modulo $p$ in $\mathbb{Q}$ either. We will repeatedly make use of this fact in the proof of Theorem \ref{mainresult}.
\end{remark}

For analogous reasons as in \ref{ProjectiveIdentification}, it suffices to assume that the elements of $S$ are positive and $q$-free so that for every $s \in S$, $\text{rad}_{q}(\lvert s \rvert) = s$. This is because modifying the elements of $S$ by perfect $q^{th}$ powers does not change its membership in the collection $\mathcal{T}_{k,q}$. Recall that in the Theorem \ref{mainresult}, we claim that the set of projective points associated with $S$ form a $k$-blocking set of $\PG(\mathbb{F}_{q}^{n})$ - a statement that is meaningful only if $n \geq (k +1)$. So, we will first establish that $n \geq k + 1$ through the following proposition. 

\begin{proposition}
Let $q$ be an odd prime and $S$ be a finite subset of integers not containing a perfect $q^{th}$ power. Suppose that there exists a natural number $k \geq 1$ such that $S \in\mathcal{T}_{k,q}$. Then, the number of distinct primes that divide $\prod_{s\in S} \text{rad}_{q}(\lvert s\rvert)$ is at least $(k + 1)$. \label{numberofprimes}
\end{proposition}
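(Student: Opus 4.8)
The plan is to prove the contrapositive in spirit: assume $n \le k$, where $n$ is the number of distinct primes dividing $\prod_{s\in S}\mathrm{rad}_q(|s|)$, and exhibit an integer $N$ with $\Omega(N)\le k$, coprime to any prescribed $\Delta$, modulo which $S$ contains no $q^{th}$ power. As noted in the excerpt, we may assume each $s\in S$ is positive and $q$-free, so $s = \prod_{i=1}^n p_i^{a_{i}(s)}$ with $0\le a_i(s) < q$, and since $S$ contains no perfect $q^{th}$ power, for each $s$ the exponent vector $(a_1(s),\dots,a_n(s))$ is a nonzero element of $\mathbb{F}_q^n$.

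The key step is to produce, for each fixed collection of exponent vectors, primes $\ell_1,\dots,\ell_n$ (all coprime to $\Delta$, hence to $q$ and to every $s\in S$) and an assignment of $q^{th}$ power residue symbols so that, setting $N = \ell_1\cdots\ell_r$ for suitable $r\le n\le k$, every $s\in S$ fails to be a $q^{th}$ power modulo $N$. Concretely, work in the cyclotomic field $K = \mathbb{Q}(\zeta_q)$. For a degree-one prime $\mathfrak{l}$ of $K$ lying over a rational prime $\ell\equiv 1\pmod q$, the symbol $\left(\tfrac{p_i}{\mathfrak{l}}\right)_q$ depends only on the Frobenius of $\ell$ in a suitable Kummer extension, and by Chebotarev (or directly by Dirichlet once one knows these extensions are linearly disjoint, which holds because the $p_i$ are distinct primes and $K(\sqrt[q]{p_1},\dots,\sqrt[q]{p_n})/K$ has degree $q^n$) one can choose $\ell$ so that the vector $\big(\log_{\zeta_q}\left(\tfrac{p_1}{\mathfrak{l}}\right)_q,\dots,\log_{\zeta_q}\left(\tfrac{p_n}{\mathfrak{l}}\right)_q\big)\in\mathbb{F}_q^n$ is any prescribed vector. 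Thus the "test functionals" available to us are precisely all of $\mathbb{F}_q^n$ (as linear forms evaluated on exponent vectors via the pairing), and $S$ contains no $q^{th}$ power mod $\ell$ iff the chosen functional is nonzero on every exponent vector of $S$.

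Since each exponent vector lies in a hyperplane-complement, the question becomes: can we pick at most $k$ linear functionals on $\mathbb{F}_q^n$ whose combined "nonvanishing locus" covers all the (finitely many) exponent vectors of $S$? Equivalently, can the exponent vectors of $S$ be covered by at most $k$ hyperplanes through the origin's complements — no: we want each $s$ to be killed by at least one of our functionals, i.e. each exponent vector avoids the kernel of at least one chosen functional. Choosing the functionals to be the $n$ coordinate projections already handles everything: since each exponent vector is nonzero, for each $s$ some coordinate $a_i(s)\ne 0$; but we only have $k\ge n$ coordinates, so taking $r = n \le k$ primes $\ell_1,\dots,\ell_n$ with $\left(\tfrac{p_i}{\mathfrak{l}_i}\right)_q = \zeta_q$ and $\left(\tfrac{p_j}{\mathfrak{l}_i}\right)_q = 1$ for $j\ne i$ gives $\left(\tfrac{s}{\mathfrak{l}_i}\right)_q = \zeta_q^{a_i(s)}$, and for each $s$ this is $\ne 1$ for some $i$. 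Then $N = \ell_1\cdots \ell_n$ has $\Omega(N) = n \le k$, is coprime to $\Delta$ by construction, and by the remark on the power residue symbol and the CRT, no $s\in S$ is a $q^{th}$ power modulo $N$ — contradicting $S\in\mathcal{T}_{k,q}$. Hence $n\ge k+1$.

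The main obstacle is the analytic/algebraic input guaranteeing that the prescribed symbol values are simultaneously realizable: one must verify that $K(\sqrt[q]{p_1},\dots,\sqrt[q]{p_n})/K$ has full degree $q^n$ (so the relevant Galois group surjects onto $(\mathbb{Z}/q)^n$ and Chebotarev yields infinitely many such $\ell$, all avoidable finitely many bad primes so as to stay coprime to $\Delta$), and that degree-one primes of $K$ of the required splitting type correspond to genuine rational primes $\ell$ with the stated residue behavior. This linear disjointness is standard (Kummer theory: $\sqrt[q]{p_1},\dots,\sqrt[q]{p_n}$ are multiplicatively independent in $K^\times/(K^\times)^q$ because the $p_i$ are distinct rational primes and $q$ is odd), so the argument goes through; the rest is bookkeeping with the multiplicativity of the power residue symbol.
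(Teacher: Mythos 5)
Your proof is correct, but it takes a genuinely different route from the paper. The paper argues by induction on $k$: the base case $k=1$ is outsourced to a citation, and the inductive step splits the (at most $k+1$) relevant primes into a block $R$ of $k$ primes and a single extra prime $R'$, writes each $s=rr'$, and handles three cases using multiplicativity of the power residue symbol together with the inductive hypothesis applied to $R\setminus\{1\}$ and $R'\setminus\{1\}$. You instead prove the contrapositive in one shot: assuming only $n\le k$ primes occur, you realize the $n$ coordinate functionals on $\mathbb{F}_q^n$ by auxiliary primes $\ell_1,\dots,\ell_n$ with prescribed symbols $\left(\tfrac{p_j}{\mathfrak{l}_i}\right)_q$, and take $N=\ell_1\cdots\ell_n$, so that every nonzero exponent vector is detected by some $\ell_i$ and hence no $s\in S$ is a $q^{th}$ power modulo $N$. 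This is exactly the alternative the authors allude to in their remark that the proposition ``can also be obtained using Chebotarev density theorem,'' and it is arguably cleaner and self-contained: the simultaneous realizability of the symbol values that you justify via Kummer theory and linear disjointness is precisely Proposition \ref{characterresidue}, already stated in the paper, so you could simply invoke it (with $a_i=p_i$, which are $\mathbb{F}_q$-linearly independent in $\mathbb{Q}^\times/(\mathbb{Q}^\times)^q$) instead of re-deriving it; the infinitude of admissible primes lets you stay coprime to any given $\Delta$, and the degree-one condition plus the CRT gives the descent from $K$ to $\mathbb{Q}$ and from the $\ell_i$ to $N$. What the paper's induction buys is independence from restating the Chebotarev-type input at each step (it is concentrated in the cited base case); what your argument buys is a shorter, case-free proof that also makes transparent why the bound is exactly $k+1$.
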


\begin{proof}
We will establish the proposition through induction on $k$. For the base case of $k = 1$, we refer the reader to \cite[pp. 5]{mishra2023prime}, which explains that for any prime $p$, the set $\{p, p^{2}, \ldots, p^{q-1}\}$ fails to have a $q^{th}$ power modulo infinitely many primes. In other words, at least two primes must divide $\prod_{s\in S} \text{rad}_{q}(|s|)$.

Now, let us assume that the proposition holds for all natural numbers $\leq k$ and suppose that $S$ is a finite subset of integers that does not contain a perfect $q^{th}$ power, but contains a $q^{th}$ power modulo almost every natural number N with $\Omega(N)$ $\leq k + 1$. 

For the sake of contradiction, assume that the number of distinct primes that divide elements of $S$ is at most $(k + 1)$, say $p_{1}, p_{2}, \ldots, p_{\mu}$ for some $\mu \leq (k+1)$. If $\mu \leq k$, the proposition follows from the inductive case. So, we assume that $\mu = k + 1$ without loss of generality. In this case, $S \subseteq ( R \times R^{\prime} ) \setminus\{1\}$, where \[ R = \left\{ \prod_{i=1}^{k} p_{i}^{a_{i}} : (a_{i})_{i=1}^{k} \in\mathbb{F}_{q}^{k} \right\} \text{ and } R^{\prime} = \{ p_{k+1}^{a_{k+1}} : a_{k+1} \in\mathbb{F}_{q} \}. \] By inductive hypothesis, we have the following: 
\begin{enumerate}
    \item For every $\Delta$, there exist infinitely many primes $p$ (i.e., $N$ with $\Omega(N) = 1$) with $p \nmid \Delta$ such that $R^{\prime}\setminus\{1\}$ does not contain a $q^{th}$ power modulo $p$. Hence, $R^{\prime}\setminus\{1\}$ does not contain a $q^{th}$ power modulo $p^{2}, p^{3}, \ldots, p^{k+1}$ either, when $p$ is one of these primes. 

    \item For every $\Delta$, there exists $N_{1}$ with $\Omega(N_{1}) \leq k$ and gcd$(N_{1}, \Delta) = 1$ such that $R\setminus\{1\}$ contains no $q^{th}$ power modulo $N_{1}$. 

    \item There exists $\Delta_{0}$ such that for every $N$ with gcd$(N, \Delta_{0}) = 1$ and $\Omega(N) \leq k - 1$, $R\setminus\{1\}$ contains a $q^{th}$ power modulo $N$. More specifically, for every prime $p$ with $p \nmid \Delta_{0}$, $R \setminus\{1\}$ contains a $q^{th}$ power modulo $p^{k-1}$.
\end{enumerate}
$(1)$ and $(2)$ above are a result of contrapositive of inductive hypothesis applied to $R^{\prime}\setminus\{1\}$ and $R\setminus\{1\}$ respectively, whereas $(3)$ is obtained from inductive hypothesis applied to $R\setminus\{1\}$. Note that every element of $S$ is of the form $s = rr^{\prime}$ where $r \in R$, $r^{\prime}\in R^{\prime}$ and at least one of $r, r^{\prime}$ is not equal to $1$. Let $\Delta$ be a natural number. Three cases arise:
\begin{itemize}
    \item \textit{Case $1$: $r^{\prime} = 1$}: In this case, we choose $N_{1}$ from $(2)$ above, which gives 
    \[ \left(\frac{s}{N_{1}}\right)_{q} = \left( \frac{r}{N_{1}} \right)_{q} \neq 1. \]

    \item \textit{Case $2$: $r = 1$}: In this case, we choose $p$ from $(1)$ above, which gives 
    \[ \left(\frac{s}{p^{k+1}}\right)_{q} = \left( \frac{s_{1}}{p^{k+1}} \right)_{q} \neq 1. \].

    \item \textit{Case $3$: $r \neq 1 \neq r^{\prime}$} In this case, we choose a prime $p$ from $(1)$ that is coprime to $\Delta_{0}$ in $(3)$, which gives

    \[ \left( \frac{s}{p^{k-1}} \right)_{q} = \left( \frac{r}{p^{k-1}} \right)_{q} \left( \frac{r^{\prime}}{p^{k-1}} \right)_{q} = 1 \left( \frac{r^{\prime}}{p^{k-1}} \right)_{q} \neq 1.  \]
\end{itemize}
Regardless of cases above, we have shown that for every $\Delta$, there exists $N$ with $\Omega(N) \leq k + 1$ such that $S$ does not contain a $q^{th}$ power modulo $N$, which establishes the proposition.
\end{proof}

In order to proceed with the proof of Theorem \ref{mainresult}, we will use the following fundamental result taken from \cite[Theorem 7.40, pp. 380]{Narkiewicz}.

\begin{proposition}
Let $q$ be a fixed rational prime, and let $K$ be an algebraic
number field containing all the $q^{th}$ roots of unity. Let $a_{1}, a_{2}, \ldots, a_{m}$ be finitely many elements in the ring of integers of $K$ that form a $\mathbb{F}_{q}$-linearly independent set, and let $z_{1}, z_{2}, \ldots, z_{m}$ be $q^{th}$ roots of unity. 
Then, there exist infinitely many unramified prime ideals $\mathfrak{p}$ of degree $1$ over $\mathbb{Q}$ such that for every $i \in\{1, 2, \ldots, m\}$, $\left(\frac{a_{i}}{p}\right)_{q} = z_{i}$.\label{characterresidue}
\end{proposition}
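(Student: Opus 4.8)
The plan is the classical one: realise the prescription of the power residue symbols $\left(\frac{a_{i}}{\mathfrak p}\right)_{q}$ as a Frobenius condition in a Kummer extension, apply Chebotarev, and then observe that the extra requirement ``degree $1$ over $\mathbb Q$'' is free because the offending primes have density zero.

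First I would set $L := K(\sqrt[q]{a_{1}},\dots,\sqrt[q]{a_{m}})$. Since $\zeta_{q}\in K$ and the images of $a_{1},\dots,a_{m}$ in $K^{\times}/(K^{\times})^{q}$ are $\mathbb F_{q}$-linearly independent, Kummer theory gives that $L/K$ is abelian with $\mathrm{Gal}(L/K)\cong(\mathbb Z/q\mathbb Z)^{m}$, the perfect pairing being $(\sigma,a)\mapsto \sigma(\sqrt[q]{a})/\sqrt[q]{a}\in\mu_{q}$; in particular the characters $\chi_{i}(\sigma):=\sigma(\sqrt[q]{a_{i}})/\sqrt[q]{a_{i}}$ form a basis of the dual group of $\mathrm{Gal}(L/K)$, so the homomorphism $\tau\mapsto(\chi_{1}(\tau),\dots,\chi_{m}(\tau))$ is an isomorphism onto $\mu_{q}^{m}$. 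The next step is the standard identification of the power residue symbol with the Artin symbol: for a prime $\mathfrak p$ of $K$ coprime to $q$ and unramified in $L$, $\mathrm{Frob}_{\mathfrak p}$ acts on residues as $x\mapsto x^{N\mathfrak p}$, so $\mathrm{Frob}_{\mathfrak p}(\sqrt[q]{a_{i}})\equiv \sqrt[q]{a_{i}}\cdot a_{i}^{(N\mathfrak p-1)/q}\pmod{\mathfrak p}$, whence $\chi_{i}(\mathrm{Frob}_{\mathfrak p})=\left(\frac{a_{i}}{\mathfrak p}\right)_{q}$ for every $i$.

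With this dictionary, given $z_{1},\dots,z_{m}\in\mu_{q}$ there is a unique $\tau\in\mathrm{Gal}(L/K)$ with $\chi_{i}(\tau)=z_{i}$ for all $i$, and the primes $\mathfrak p$ demanded by the proposition are exactly those of $K$ that are unramified over $\mathbb Q$ and in $L$, of residue degree $1$ over $\mathbb Q$, and satisfy $\mathrm{Frob}_{\mathfrak p}=\tau$. Now I would invoke two facts. (a) Since $L/K$ is abelian, the Chebotarev density theorem says $\{\mathfrak p\subset\mathcal O_{K}:\mathrm{Frob}_{\mathfrak p}=\tau\}$ has Dirichlet density $1/[L:K]=q^{-m}>0$. (b) The primes $\mathfrak p$ of $K$ of residue degree $\ge 2$ over $\mathbb Q$ have density $0$: for each rational prime $p$ there are at most $[K:\mathbb Q]$ such primes above it and each has $N\mathfrak p\ge p^{2}$, so $\sum_{\mathfrak p:\,f\ge2}N\mathfrak p^{-s}\le[K:\mathbb Q]\sum_{p}p^{-2s}$ stays bounded as $s\to1^{+}$, whereas $\sum_{\mathfrak p}N\mathfrak p^{-s}\sim\log\frac1{s-1}$. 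Removing also the finitely many primes that ramify in $L/\mathbb Q$ and those dividing some $a_{i}$, we are left with a set of primes of density $q^{-m}>0$; in particular there are infinitely many primes $\mathfrak p$ of $K$ of degree $1$ over $\mathbb Q$, unramified in $L$, with $\mathrm{Frob}_{\mathfrak p}=\tau$, and by construction each of them satisfies $\left(\frac{a_{i}}{\mathfrak p}\right)_{q}=z_{i}$ for all $i$.

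The only genuine input is the Kummer-theoretic step, namely that the hypothesis ``$a_{1},\dots,a_{m}$ form an $\mathbb F_{q}$-linearly independent set'' really forces $[L:K]=q^{m}$, so that $\tau$ is uniquely pinned down by the $z_{i}$'s; everything after that is the classical Chebotarev theorem together with the elementary density-zero estimate for higher-degree primes. A secondary point that needs care is the normalisation in matching the power residue symbol to the Artin symbol, and the observation that ``degree $1$ over $\mathbb Q$'' cannot in general be rephrased as ``$p$ splits completely in $K$'' when $K/\mathbb Q$ is not Galois — but this causes no trouble, since the density-zero argument in (b) never uses normality of $K/\mathbb Q$.
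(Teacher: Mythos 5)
Your proof is correct. The paper does not actually prove this proposition — it quotes it verbatim from Narkiewicz (Theorem 7.40) — but the remark following the lemma derived from it explicitly identifies the argument on p.~380 of that reference as the Chebotarev density theorem applied to the Kummer extension $K(\zeta_q, a_1^{1/q},\dots,a_m^{1/q})$, which is precisely the route you take (with the degree-one condition handled, as is standard, by the density-zero estimate for primes of higher residue degree), so your approach coincides with the cited proof.
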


The proposition above gives the following lemma, which in our concrete context is a key ingredient in the proof of our main result. 

\begin{lemma}
\label{characterlemma}
Let $S = \{s_{j}\}_{j=1}^{\ell}$ be a finite subset of integers and let $n$ be the number of distinct primes that divide $\prod_{j=1}^{\ell} \text{rad}_{q}(\lvert s_{j} \rvert)$ so that the subspace $V$ generated by the set $\pi_{q}(S)$ is a subset of $\mathbb{F}_{q}^{n}$ as in \ref{ProjectiveIdentification}. Then, for every $\chi\in\hat{V}$, there exist infinitely many unramified primes $\mathfrak{p}$, of degree one, in $\mathbb{Q}(\zeta_{q})$ such that $\chi(v) = \left(\frac{\pi_{q}^{-1}(v)}{\mathfrak{p}}\right)_{q}$ for every $v \in V$. 
\end{lemma}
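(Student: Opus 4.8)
The plan is to reduce the statement to Proposition~\ref{characterresidue} by choosing a convenient $\mathbb{F}_q$-linearly independent lift of a basis of $V$ into the integers, and then to extend the agreement from that basis to all of $V$ via the multiplicativity of the power residue symbol. Concretely, let $p_1 < p_2 < \cdots < p_n$ be the distinct primes dividing $\prod_{j} \mathrm{rad}_q(|s_j|)$, and let $e_1, \ldots, e_n$ be the standard basis of $\mathbb{F}_q^n$, so that $\pi_q^{-1}(e_i)$ is (a perfect $q$-th power multiple of) the prime $p_i$. After reordering we may assume $e_1, \ldots, e_m$ is a basis of $V=\langle \pi_q(S)\rangle$ — but note that the $e_i$ need not lie in $V$; however any basis $v_1, \ldots, v_m$ of $V$ arising from elements of $\pi_q(S)$ works equally well, with integer lifts $a_i := \pi_q^{-1}(v_i) \in S$ (made positive and $q$-free as in the reduction of Section~\ref{ProjectiveIdentification}).

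The first key step is to observe that $\{a_1, \ldots, a_m\}$, viewed in the ring of integers $\mathbb{Z}[\zeta_q]$ of $K = \mathbb{Q}(\zeta_q)$, is an $\mathbb{F}_q$-linearly independent set in the sense required by Proposition~\ref{characterresidue}: this is because the prime factorizations of the $a_i$ involve only $p_1, \ldots, p_n$, and the exponent vectors mod $q$ are exactly $v_1, \ldots, v_m$, which are $\mathbb{F}_q$-independent by construction; an $\mathbb{F}_q$-linear dependence $\prod a_i^{c_i} = (\text{$q$-th power})$ in $K^\times$ would force $\sum c_i v_i \equiv 0$ in $\mathbb{F}_q^n$ (comparing $p_i$-adic valuations and using that the $p_i$ remain non-units, indeed unramified, in $K$ since $p_i \neq q$), hence all $c_i = 0$. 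The second key step: given $\chi \in \hat V$, set $z_i := \chi(v_i)$, a $q$-th root of unity, and apply Proposition~\ref{characterresidue} to get infinitely many unramified degree-one primes $\mathfrak{p}$ of $K$ with $\left(\frac{a_i}{\mathfrak{p}}\right)_q = z_i$ for all $i$. The third step is to propagate this equality to all of $V$: any $v \in V$ writes uniquely as $v = \sum_i c_i v_i$ with $c_i \in \{0,1,\ldots,q-1\}$ (interpreted in $\mathbb{Z}$), and then $\pi_q^{-1}(v)$ differs from $\prod_i a_i^{c_i}$ by a perfect $q$-th power; since $\left(\frac{\cdot}{\mathfrak{p}}\right)_q$ kills $q$-th powers and is multiplicative, $\left(\frac{\pi_q^{-1}(v)}{\mathfrak{p}}\right)_q = \prod_i \left(\frac{a_i}{\mathfrak{p}}\right)_q^{c_i} = \prod_i z_i^{c_i} = \prod_i \chi(v_i)^{c_i} = \chi\!\left(\sum_i c_i v_i\right) = \chi(v)$, using that $\chi$ is a homomorphism into $\mu_q$ so exponents only matter mod $q$ — consistent with $z_i^q = 1$.

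The main obstacle, and the point requiring the most care, is the linear-independence verification in the first step: one must be careful that $\mathbb{F}_q$-linear independence of the exponent vectors in $\mathbb{F}_q^n$ is genuinely equivalent to $\mathbb{F}_q$-linear independence of the $a_i$ as elements of $\mathbb{Z}[\zeta_q]$ modulo $q$-th powers, which hinges on the fact that each $p_i$ is coprime to $q$ and therefore its valuation behaves predictably in $K$ — in particular no $p_i$ becomes a unit or ramifies, so valuations detect the exponent vectors faithfully. A secondary subtlety is bookkeeping around the reductions already made in the paper (passing from $S$ to positive $q$-free representatives and to $\mathrm{rad}_q$), so that $\pi_q^{-1}$ is well-defined up to perfect $q$-th powers and the identity $\pi_q^{-1}(v) \equiv \prod a_i^{c_i}$ modulo $q$-th powers holds on the nose; this is routine given Section~\ref{ProjectiveIdentification} but should be stated cleanly. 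Everything else is a direct application of multiplicativity of the power residue symbol and the definition of $\hat V$.
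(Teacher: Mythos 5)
Your proposal is correct and follows essentially the same route as the paper: pick a basis of $V$ coming from $\pi_q(S)$, apply Proposition~\ref{characterresidue} with $z_i = \chi(v_i)$, and propagate to all of $V$ by multiplicativity of the power residue symbol and the homomorphism property of $\chi$. The only difference is that you explicitly justify why $\mathbb{F}_q$-linear independence of the exponent vectors yields the multiplicative independence modulo $q$-th powers in $\mathcal{O}_K$ required by Proposition~\ref{characterresidue}, a point the paper passes over as holding ``by definition''; your added verification is correct and arguably a welcome clarification.
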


\begin{proof}
Let $K = \mathbb{Q}(\zeta_{q})$, $\mathcal{A} = \{a_{i}\}_{i=1}^{m}$ be a basis of $V$ and let $s_{i} := \pi_{q}^{-1}(a_{i})$ for every $1 \leq i \leq m$ (after reordering $s_{j}$'s if needed). Since $\mathcal{A}$ is also $\mathbb{F}_{q}$-linearly independent, the set $\pi_{q}^{-1}(\mathcal{A}) = \{s_{1}, s_{2}, \ldots, s_{m}\}$ is $\mathbb{F}_{q}$-linearly independent in $S$ by definition. An application of Proposition \ref{characterresidue} for $z_{i} = \chi(a_{i})$ implies that there exist infinitely many unramified prime ideals $\mathfrak{p}$ in $K$, of degree one, such that $\chi(a_{i}) = \left( \frac{s_{i}}{\mathfrak{p}} \right)_{q}$ for every $i = 1, 2, \ldots, m$.  

Let $v \in V$. Since the set $\mathcal{A}$ forms a basis for $V$, there exists $(c_{i})_{i=1}^{m} \in\mathbb{F}_{q}^{m}$ such that $v = \sum_{i=1}^{m} c_{i} a_{i}$, and hence $\pi_{q}^{-1}(v) = \prod_{i=1}^{m} s_{i}^{c_{i}}$ Therefore, we have
\begin{equation*}
     \chi(v) = \chi \left( \sum_{i=1}^{m} c_{i} a_{i} \right) =  \prod_{i=1}^{m} \chi ( c_{i} a_{i} )  = \prod_{i=1}^{m}  \chi (  a_{i} )^{c_{i}}  = \prod_{i=1}^{m} \left(\frac{s_{i}}{\mathfrak{p}}\right)^{c_{i}}_{q} = \left( \frac{\prod_{i=1}^{m} s_{i}^{c_{i}}}{\mathfrak{p}}\right)_{q} = \left( \frac{\pi_{q}^{-1}(v)}{\mathfrak{p}} \right)_{q},  
\end{equation*}
for any of such unramified, degree one, prime ideals $\mathfrak{p}$ in $K$. In the above series of equalities, the additive notation turns into a multiplicative notation because $\chi$ is a homomorphism from the additive group $\mathbb{F}_{q}^{n}$ to $\mathbb{C}^{\times}$.
\end{proof}

\begin{remark}
Note that Lemma \ref{characterlemma} above can also be obtained solely through the use of the Chebotarev's density theorem (see \cite[Theorem 7.30, pp. 368]{Narkiewicz}) for the field extension $K / \mathbb{Q}$, where $K = \mathbb{Q}\big(\zeta_{q}, a_{1}^{1/q}, a_{2}^{1/q}, \ldots, a_{m}^{1/q}\big)$ and $\{a_{i}\}_{i=1}^{m}$ are as in the proof of Lemma \ref{characterlemma}. This is because $\{a_{i}\}_{i=1}^{m}$ is a $\mathbb{F}_{q}$-linearly independent set and hence the Galois group of $K/\mathbb{Q}$ is isomorphic to the semi-direct product $\big(\mathbb{Z}/q\mathbb{Z}\big)^{m} \rtimes \big(\mathbb{Z}/q\mathbb{Z}\big)^{\times}$. However, this essentially amounts to reproving the Proposition \ref{characterresidue} using the same argument of the proof as in \cite[pp. 380]{Narkiewicz}. Similarly, Proposition \ref{numberofprimes} can also be obtained using Chebotarev density theorem; however, we choose to present a more elementary inductive proof. 
\end{remark}

Now, we are ready to establish our main result. 

\section{Proof of \Cref{mainresult}.}
\subsection{Proof of (1) implies (2)}
Assume that $S$ contains a $q^{th}$ power modulo almost every natural number $N$ with $\Omega(N) \leq k$ and $U$ be a subspace of $\mathbb{F}_{q}^{n}$ of codimension $k$. Such a subspace $U$ is defined by elements $\chi_{1}, \chi_{2}, \ldots, \chi_{k} \in \hat{\mathbb{F}_{q}^{n}}$. In other words, \[  U = \Big\{ v \in\mathbb{F}_{q}^{n} : \bigcap_{i=1}^{k} \chi_{i}(v) = 1 \Big\}. \]

Using Lemma \ref{characterlemma}, we have that for every $1 \leq i \leq k$, there exists infinitely many unramified primes $\mathfrak{p_{i}}$'s in $\mathbb{Q}(\zeta_{q})$ such that \[\chi_{j}(v) = \left(\frac{\pi_{q}^{-1}(v)}{\mathfrak{p_{i}}}\right)_{q} \text{ for every } v \in V.\] 
Since $S$ contains a $q^{th}$ power modulo almost every natural number $N$ with $\Omega(N) \leq k$, there must exist $\mathfrak{p}_{1}, \mathfrak{p}_{2}, \ldots, \mathfrak{p}_{k}$ and $s \in S$ such that $\left(\frac{s}{\mathfrak{p_{i}}}\right)_{q} = 1$ for each $1 \leq i \leq k$. Since $\pi_{q}(s) \in V$, we have $\chi_{i}\big(\pi_{q}(s)\big) = \left(\frac{\pi_{q}^{-1}(\pi_{q}(s)}{\mathfrak{p_{i}}}\right)_{q} = \left(\frac{s}{\mathfrak{p_{i}}}\right)_{q} = 1$ for every $1 \leq i \leq k$. Since the point sets $\mathcal{S} \subseteq \PG(\F_q^n)$ associated with $S$ contains the projective points defined by the elements of $\pi_{q}(S)$, we get that $\mathcal{S} \cap \PG(U) \neq \emptyset$. Therefore, $\mathcal{S}$ is a $k$-blocking set of $\PG(\F_q^n)$.

\subsection{Proof of $(2)$ implies $(1)$.}
Assume that the point set  $\mathcal{S} \subseteq \PG(\F_q^n)$ associated with $S$ is a $k$-blocking set of $\PG(\F_q^n)$. Therefore, $\pi_{q}(S)$ intersects every subspace of $U$ of $\mathbb{F}_{q}^{n}$ with codimension $k$ non-trivially. Since the projective points of $S$ are defined by the elements of $\pi_{q}(S)$, we get that $\pi_{q}(S)$ intersects every subspace of $U$ of $\mathbb{F}_{q}^{n}$ with codimension $k$ non-trivially. Furthermore, let $N$ be a natural number with $\Omega(N) \leq k$, i.e., $N = \prod_{i=1}^{\nu} p_{i}^{b_{i}}$ with $b_{i} \geq 1$, $\sum_{i=1}^{\nu} b_{i} \leq k$ such that $q \nmid N$. 

Define $\chi^{\prime}_{i}(v) := \left(\frac{\pi_{q}^{-1}(v)}{p_{i}} \right)_{q}$ for every $1 \leq i \leq \nu$. Since each of the $\chi^{\prime}_{i}$ are elements of $\hat{V}$, the subspace \[ U^{\prime} := \Big\{ v \in\mathbb{F}_{q}^{n} : \bigcap_{i=1}^{\nu} \chi^{\prime}_{i}(v) = \left(\frac{\cdot}{p_{i}} \right)_{q} = 1 \Big\}\] is a subspace of codimension $\nu \leq k$. Therefore, by assumption, there exists a $s \in S$ such that $\pi_{q}(s) \in U^{\prime}$, i.e., $\chi^{\prime}_{i}(\pi_{q}(s)) = \left(\frac{\pi_{q}^{-1}(\pi_{q}(s))}{p_{i}} \right)_{q} = \left(\frac{s}{p_{i}} \right)_{q} = 1$ for every $1 \leq i \leq \nu$. The proof works for any arbitrary natural number $N$ with $\Omega(N) \leq k$ that is coprime to $q \prod_{s\in S} s$, when the $q^{th}$ power residue symbol $\left(\frac{v}{p}\right)_{q}$ is defined. \qed

Before diving into some deeper structural consequences, we explore some immediate corollaries of Theorem \ref{mainresult}. First, the property of a set $S$ of belonging to the family $\mathcal{T}_{k,q}$ is invariant under element-wise exponentiation by elements of $\mathbb{F}_{q}^{\times}$. Furthermore, whether a finite set $S \subset\mathbb{Z}$ belongs to $\mathcal{T}_{k,q}$ depends only upon the factorization shape of its elements, and not on the specific primes that divide its elements. Both of these consequences, stated in the corollary below, follow because the projective points associated with $S$ neither change under exponentiation of elements of $S$ by elements of $\mathbb{F}_{q}^{\times}$ nor change under switching of primes.

\begin{corollary}\label{cor:indendenceprimes}
    \begin{enumerate}
        \item $\left( \text{Invariance Under Exponentiation} \right)$ For every $S = \{s_{j}\}_{j=1}^{\ell} \subset\mathbb{Z}$ and every $a_1,\ldots,a_{\ell} \in \F_q^{\times}$, we have that $S \in \mathcal{T}_{k,q}$ if and only if  $\{s_{j}^{a_j}\}_{j=1}^{\ell} \in \mathcal{T}_{k,q}$. \vspace{2mm}

        \item $\left( \text{Switching of Primes} \right)$  Let $\{p_{i}\}_{i=1}^{n}, \{\overline{p}_{i}\}_{i=1}^{n}$ be two distinct finite sets of rational primes and $S = \left\{\prod_{i=1}^{n} p_{i}^{\nu_{ij}}\right\}_{j=1}^{\ell}$. Then, $S \in \mathcal{T}_{k,q}$ if and only if 
    \[\left\{\prod_{i=1}^n\overline{p}_{i}^{\nu_{ij}}\right\}_{j=1}^{\ell} \in \mathcal{T}_{k,q}.\] \label{secondbasiccorollary}
    \end{enumerate}   
\end{corollary}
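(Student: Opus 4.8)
The plan is to deduce Corollary \ref{cor:indendenceprimes} directly from Theorem \ref{mainresult} by observing that both operations described — elementwise exponentiation by units of $\F_q^\times$ and "switching of primes" — leave the set of projective points associated with $S$ unchanged, and hence leave invariant the property of being a $k$-blocking set of $\PG(\F_q^n)$. So the whole proof is really a matter of unwinding the definition of $\pi_q$ from \S\ref{ProjectiveIdentification} and checking that the combinatorial object "set of points in $\PG(\F_q^n)$" produced by $S$ is literally the same in each case. Throughout I will use the reduction already noted after Remark: membership in $\mathcal T_{k,q}$ is unchanged if we replace each $s_j$ by $\mathrm{rad}_q(|s_j|)$, so we may assume all $s_j$ are positive and $q$-free.

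For part $(1)$, fix $S=\{s_j\}_{j=1}^\ell$ and $a_1,\dots,a_\ell\in\F_q^\times$, and set $S'=\{s_j^{a_j}\}_{j=1}^\ell$. First I would note that $s_j^{a_j}$ (interpreting $a_j$ as a representative in $\{1,\dots,q-1\}$) has exactly the same prime divisors as $s_j$, so $\prod_j \mathrm{rad}_q(|s_j^{a_j}|)$ is divisible by the same $n$ primes $p_1<\cdots<p_n$ as $\prod_j \mathrm{rad}_q(|s_j|)$; thus both $S$ and $S'$ embed into the same $\F_q^n$ and $n$ is unchanged. Next, since $\pi_q$ reads off, for each $j$, the vector of exponents $(a_{ij})_i$ with $p_i^{a_{ij}}\|\mathrm{rad}_q(|s_j|)$, exponentiating $s_j$ by $a_j$ multiplies that exponent vector by the scalar $a_j$ (modulo $q$), i.e. $\pi_q(s_j^{a_j})=a_j\,\pi_q(s_j)$. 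Since $a_j\in\F_q^\times$, the projective point $\langle \pi_q(s_j^{a_j})\rangle = \langle a_j\,\pi_q(s_j)\rangle = \langle \pi_q(s_j)\rangle$. Hence the set of projective points associated with $S'$ equals the set associated with $S$, and Theorem \ref{mainresult} gives $S\in\mathcal T_{k,q}\iff S'\in\mathcal T_{k,q}$.

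For part $(2)$, given $S=\{\prod_{i=1}^n p_i^{\nu_{ij}}\}_{j=1}^\ell$ and a second set of primes $\{\overline p_i\}_{i=1}^n$, set $\overline S=\{\prod_{i=1}^n \overline p_i^{\nu_{ij}}\}_{j=1}^\ell$. Here one must be slightly careful: the definition of $\pi_q$ orders the relevant primes increasingly and only counts the primes that actually divide the product; I would handle this by WLOG reindexing so that the $\nu_{ij}$ for which no $j$ gives a nonzero exponent are dropped (they contribute the zero coordinate to every vector and hence are irrelevant), and assuming both $\{p_i\}$ and $\{\overline p_i\}$ are listed in increasing order with the $i$-th prime of one list playing the role of the $i$-th prime of the other. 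Under this normalization $\pi_q$ sends $\prod_i p_i^{\nu_{ij}}\mapsto (\nu_{ij}\bmod q)_i$ and likewise for $\overline S$ with exactly the same exponent vectors, so the two sets of projective points in $\PG(\F_q^n)$ coincide verbatim. Again Theorem \ref{mainresult} finishes the argument.

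The only genuine subtlety — the place where the proof could go wrong if stated carelessly — is the bookkeeping in part $(2)$: ensuring that the primes dividing the $q$-free parts are matched up in the correct order and that "extra" primes appearing in the listing but not actually dividing any $s_j$ do not change $n$ or the point set. This is not deep, but it is the step I would write out most explicitly. Everything else is immediate from the fact that $\pi_q$ depends on the $s_j$ only through their exponent vectors mod $q$, together with the projective invariance under scaling by $\F_q^\times$. I would also remark that part $(1)$ is in fact a special case of the $\mathrm{PGL}(n,q)$-invariance statement promised in the introduction (scaling each basis vector by a unit is a diagonal element of $\mathrm{PGL}$), but since that theorem has not yet been proved at this point in the paper, I would keep the present proof self-contained as above.
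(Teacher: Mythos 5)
Your argument is exactly the paper's: the paper derives both parts of the corollary from Theorem \ref{mainresult} by noting that the associated set of projective points is unchanged under exponentiation by elements of $\F_q^{\times}$ (since $\langle a_j\,\pi_q(s_j)\rangle=\langle\pi_q(s_j)\rangle$) and under switching of primes (since the exponent vectors are identical). Your write-up just makes the bookkeeping explicit, which is fine; no gaps.
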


\section{Lower bounds and characterization of sets in $\mathcal{T}_{k,q}$}
In this section, we first provide lower bounds on the size of the sets in $\mathcal{T}_{k,q}$ that do not contain a perfect $q^{th}$ power. Then, we will also characterize those sets in $\mathcal{T}_{k,q}$ that have the minimum size. Finally, we will construct minimal sets in $\mathcal{T}_{k,q}$ of the second smallest cardinality. 

One of the main consequences of Theorem \ref{mainresult} is that the property whether a given $S$ belongs to $\mathcal{T}_{k,q}$ is invariant under a suitably defined action by elements of $\mathrm{PGL}(n,q)$, which we shall call \emph{geometric $q$-equivalence} - which is defined below.

\begin{definition} \label{def:geometricqequiv}
Let $S = \{s_{j}\}_{j=1}^{m}$ and $T = \{t_{j}\}_{j=1}^{\ell}$ be two finite sets of non-zero integers, not containing a perfect $q^{th}$ power. Let $p_{1}, p_{2}, \ldots, p_{n}$ be all the primes that divide $\Big( \prod_{j=1}^{m} \text{rad}_{q}(\lvert s_{j} \rvert) \times \prod_{j=1}^{\ell}  \text{rad}_{q}(\lvert t_{j} \rvert) \Big)$. Let $ \text{rad}_{q}(|s_{j}|) = \prod_{i=1}^{n} p_{i}^{\nu_{ij}}$
for every $j\in\{1, 2, \ldots, m\}$ and $\text{rad}_{q}(|t_{j}|) = \prod\limits_{i=1}^{n} p_{i}^{\mu_{ij}}$ for every  $j \in\{1, 2, \ldots, \ell\}$, where $\nu_{ij}, \mu_{ij} \geq 0$. Define 
\[
\mathcal{S} = \big\{ \langle ( \nu_{1j}, \nu_{2j}, \ldots, \nu_{nj} )\rangle_{\fq} \in\PG(\F_{q}^{n}) : j\in\{1, 2, \ldots, m\} \big\} \]
and
\[ \mathcal{T} = \big\{\langle (\mu_{1j}, \mu_{2j}, \ldots, \mu_{nj} ) \rangle_{\fq} \in\PG(\F_{q}^{n}) : j\in\{1, 2, \ldots, \ell\} \big\}
\]
to be the point sets in $\PG(\mathbb{F}_{q}^{n})$ associated with $S$ and $T$, respectively. We will say that the sets $S$ and $T$ are \textbf{geometric q-equivalent} if and only if there exists an element $\Psi \in\mathrm{PGL}(n,q)$ such that $\Psi(\mathcal{S}) = \mathcal{T}$. 
\end{definition}

By using the geometric description of sets in $\mathcal{T}_{k,q}$ provided in Theorem \ref{mainresult}, we prove that property of whether a finite subset of $\mathbb{Z}$ belongs to $\mathcal{T}_{k,q}$ is invariant under geometric $q$-equivalence. 

\begin{proposition} \label{th:propertygeometricequiv}
    Let $S=\{s_j\}_{j=1}^{m} \subset \Z \setminus \{0\}$ be a set of integers not containing a perfect $q^{th}$ power. Assume that $S \in \mathcal{T}_{k,q}$. Then every set $T = \{t_{j}\}_{j=1}^{\ell}$ that is geometric $q$-equivalent to $S$ belongs to $\mathcal{T}_{k,q}$. 
\end{proposition}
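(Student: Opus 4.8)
The plan is to combine Theorem~\ref{mainresult} with the elementary observation that $k$-blocking sets are preserved under the action of $\mathrm{PGL}(n,q)$. Concretely, since $S \in \mathcal{T}_{k,q}$, Theorem~\ref{mainresult} tells us that the point set $\mathcal{S} \subseteq \PG(\F_q^n)$ associated with $S$ is a $k$-blocking set; and by hypothesis there is $\Psi \in \mathrm{PGL}(n,q)$ with $\Psi(\mathcal{S}) = \mathcal{T}$, where $\mathcal{T}$ is the point set associated with $T$. If I can show $\mathcal{T}$ is again a $k$-blocking set, then Theorem~\ref{mainresult} (in the direction (2) implies (1)) immediately gives $T \in \mathcal{T}_{k,q}$, which is what we want.

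First I would record the routine linear-algebra fact that $\mathrm{PGL}(n,q)$ acts on subspaces of $\F_q^n$ preserving codimension: if $\Psi$ is induced by $g \in \mathrm{GL}(n,q)$ and $W$ has codimension $k$, then $g(W)$ has codimension $k$ as well, and $\PG(g(W)) = \Psi(\PG(W))$. Then the verification that $\mathcal{T} = \Psi(\mathcal{S})$ is a $k$-blocking set is a one-line chase: given any codimension-$k$ subspace $W$, write $W = g(W')$ for $W' = g^{-1}(W)$, which again has codimension $k$; since $\mathcal{S}$ is a $k$-blocking set, $\mathcal{S} \cap \PG(W') \neq \emptyset$; applying $\Psi$ (a bijection of $\PG(\F_q^n)$) gives $\Psi(\mathcal{S}) \cap \Psi(\PG(W')) = \mathcal{T} \cap \PG(W) \neq \emptyset$. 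Hence $\mathcal{T}$ is a $k$-blocking set.

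One small bookkeeping point to address is that the ambient dimension $n$ in Definition~\ref{def:geometricqequiv} is the number of primes dividing the product of the $q$-free parts of elements of \emph{both} $S$ and $T$, which may be larger than the number of primes dividing $\prod \text{rad}_q(|s_j|)$ alone (call the latter $n_S$). This is harmless: embedding $\F_q^{n_S}$ into $\F_q^{n}$ as a coordinate subspace sends the point set of $S$ to $\mathcal{S}$, and by Proposition~\ref{numberofprimes} together with Theorem~\ref{mainresult} a $k$-blocking set of $\PG(\F_q^{n_S})$ remains a $k$-blocking set when viewed inside $\PG(\F_q^{n})$ only if $n_S \geq k+1$, which holds since $S \in \mathcal{T}_{k,q}$ — but actually the cleanest route is to simply apply Theorem~\ref{mainresult} to $T$ directly in $\PG(\F_q^{n_T})$ after noting that $\mathcal{T}$ spans a coordinate subspace and a $k$-blocking set of the whole space restricts to one of any spanning subspace of dimension $\geq k+1$ containing it. I would phrase the argument so as to sidestep this by working consistently in $\PG(\F_q^n)$ and invoking Theorem~\ref{mainresult} for both $S$ and $T$ with the understanding (as in the reductions of Section~\ref{ProjectiveIdentification}) that padding with unused primes changes neither membership in $\mathcal{T}_{k,q}$ nor the blocking-set property.

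The main (and essentially only) obstacle is this dimension-matching subtlety; the geometric heart of the proof — that $\mathrm{PGL}(n,q)$ permutes $k$-blocking sets — is genuinely immediate once stated. So the write-up will spend most of its words making the identification of ambient spaces precise and then dispatch the rest in two sentences via Theorem~\ref{mainresult}.
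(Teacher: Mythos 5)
Your proposal is correct and follows essentially the same route as the paper's proof: apply Theorem~\ref{mainresult} to get that the point set of $S$ is a $k$-blocking set, pad with zero coordinates to work in the common ambient space $\PG(\F_q^n)$ (where the paper cites \cite[Proposition 2.3]{blokhuis2011blocking} for the fact that this preserves the blocking property), use that $\mathrm{PGL}(n,q)$ permutes codimension-$k$ subspaces and hence preserves $k$-blocking sets, strip the unused coordinates for $T$, and apply Theorem~\ref{mainresult} in the reverse direction. The dimension-matching bookkeeping you flag is exactly the point the paper's proof spends most of its words on, and your treatment of it is sound.
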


\begin{proof}
Let \( n' \) be the number of primes dividing \( \prod_{j=1}^{\ell} \rad(\lvert s_j\rvert) \). Since \( S \in \mathcal{T}_{k,q} \), by Theorem \ref{mainresult}, the set \( \mathcal{S}' \) of projective points associated with \( S \) forms a \( k \)-blocking set in \( \PG(\mathbb{F}_{q}^{n'}) \). Now, let \( p_1, \ldots, p_n \) be the primes dividing \( \prod_{j=1}^{m} \text{rad}_{q}(\lvert s_{j} \rvert) \times \prod_{j=1}^{\ell} \text{rad}_{q}(\lvert t_{j} \rvert) \), and let \( \mathcal{S} \subseteq \PG(\mathbb{F}_q^n) \) be the point set associated with \( S \) as in \Cref{def:geometricqequiv}. By construction, \( \mathcal{S} \) is obtained from \( \mathcal{S}' \) by adding \( n - n' \) zero components to the vectors representing the elements of \( \mathcal{S}' \). Therefore, the set \( \mathcal{S} \) is also a \( k \)-blocking set in \( \PG(\mathbb{F}_{q}^{n}) \) because \( n \geq n' \) (cf. \cite[Proposition 2.3]{blokhuis2011blocking}).

Let \( \mathcal{T} \) be the point set associated with \( T \) as in \Cref{def:geometricqequiv}. Since \( T \) is geometrically \( q \)-equivalent to \( S \), there exists an element \( \Psi \in \mathrm{PGL}(n,q) \) such that
\[
\Psi(\mathcal{S}) = \mathcal{T}.
\]
The property of being a \( k \)-blocking set is invariant under the action of \( \mathrm{PGL}(n,q) \) on \( \PG(\mathbb{F}_q^n) \), and thus \( \mathcal{T} \) is also a \( k \)-blocking set in \( \PG(\mathbb{F}_q^n) \). Let \( n^{\prime\prime} \) be the number of primes dividing \( \prod_{h=1}^{m} \text{rad}_{q}(\lvert t_{h} \rvert) \). The point set \( \mathcal{T}' \subseteq \PG(\mathbb{F}_q^{n''}) \) associated with \( T \) as in \Cref{ProjectiveIdentification} is obtained from \( \mathcal{T} \) by removing \( n - n'' \) zero components from the coordinates of the vectors representing the elements of \( \mathcal{T} \), corresponding to the primes in \( \{p_1, \ldots, p_n\} \) that do not divide \( \prod_{h=1}^{m} \text{rad}_{q}(\lvert t_{h} \rvert) \). It is straightforward to check that \( \mathcal{T}' \) is a \( k \)-blocking set in \( \PG(\mathbb{F}_q^{n''}) \).

Again, by Theorem \ref{mainresult}, we conclude that \( T \in \mathcal{T}_{k,q} \), and thus the assertion follows.
\end{proof}

An immediate consequence of the Propostion \ref{th:propertygeometricequiv} is that sets in $\mathcal{T}_{k,q}$ are also invariant under prime-wise exponentiation in the following sense.

\begin{corollary}
$\left( \text{Powers of primes in the factorization} \right)$  Let $p_1,\ldots,p_n$ be distinct primes and $S = \left\{\prod_{i=1}^{n} p_{i}^{\nu_{ij}}\right\}_{j=1}^{\ell}$ be a finite subset of integers . Assume that $S \in \mathcal{T}_{k,q}$. Then 
    \[S'=\left\{\prod_{i=1}^np_{i}^{b_i\nu_{ij}}\right\}_{j=1}^{\ell} \in \mathcal{T}_{k,q},\]
    for every $b_1,\ldots,b_{n} \in \F_q^{\times}$.
\end{corollary}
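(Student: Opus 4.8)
The plan is to deduce this corollary directly from Proposition~\ref{th:propertygeometricequiv} by exhibiting an explicit element of $\mathrm{PGL}(n,q)$ that realizes the passage from $S$ to $S'$. First I would set up coordinates: let $p_1,\ldots,p_n$ be the distinct primes in question, and observe that after the reduction to $q$-free positive representatives (which does not change membership in $\mathcal{T}_{k,q}$), the point set $\mathcal{S}\subseteq\PG(\F_q^n)$ associated with $S$ consists of the points $\langle(\nu_{1j},\ldots,\nu_{nj})\rangle_{\F_q}$ for $j=1,\ldots,\ell$, while the point set $\mathcal{S}'$ associated with $S'$ consists of the points $\langle(b_1\nu_{1j},\ldots,b_n\nu_{nj})\rangle_{\F_q}$. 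The observation is that these two point sets differ precisely by the diagonal linear map $D=\mathrm{diag}(b_1,\ldots,b_n)\in\GL(n,q)$, which is invertible since each $b_i\in\F_q^{\times}$, hence induces an element $\Psi=[D]\in\mathrm{PGL}(n,q)$ with $\Psi(\mathcal{S})=\mathcal{S}'$.

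The key steps, in order, are: (i) reduce to the case where all $s_j$ are positive and $q$-free, citing the reduction in Section~\ref{ProjectiveIdentification} and the fact noted after Definition~\ref{defnblocking} that multiplying elements of $S$ by perfect $q^{th}$ powers does not affect membership in $\mathcal{T}_{k,q}$; (ii) write down the point sets $\mathcal{S}$ and $\mathcal{S}'$ associated with $S$ and $S'$ with respect to the common prime list $p_1,\ldots,p_n$ (note that $S$ and $S'$ involve exactly the same primes since $b_i\neq 0$, so no padding with zero coordinates is needed and the common-prime set in Definition~\ref{def:geometricqequiv} is just $\{p_1,\ldots,p_n\}$); (iii) verify that the diagonal matrix $D=\mathrm{diag}(b_1,\ldots,b_n)$ satisfies $D\cdot(\nu_{1j},\ldots,\nu_{nj})^{\top}=(b_1\nu_{1j},\ldots,b_n\nu_{nj})^{\top}$, so that the induced projectivity $\Psi=[D]$ sends $\langle(\nu_{1j},\ldots,\nu_{nj})\rangle_{\F_q}$ to $\langle(b_1\nu_{1j},\ldots,b_n\nu_{nj})\rangle_{\F_q}$, i.e.\ $\Psi(\mathcal{S})=\mathcal{S}'$; (iv) conclude that $S$ and $S'$ are geometrically $q$-equivalent in the sense of Definition~\ref{def:geometricqequiv}, and invoke Proposition~\ref{th:propertygeometricequiv} to obtain $S'\in\mathcal{T}_{k,q}$.

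I do not anticipate a genuine obstacle here; the corollary is essentially a restatement of Proposition~\ref{th:propertygeometricequiv} once one identifies prime-wise exponentiation with a diagonal change of coordinates. The only point requiring a modicum of care is a bookkeeping issue: one should check that $S$ and $S'$ determine the \emph{same} set of relevant primes (this is where $b_i\in\F_q^{\times}$, rather than merely $b_i\in\F_q$, is used — if some $b_i$ were zero the $i$-th prime could drop out and the matrix would fail to be invertible), and that the point sets are taken with respect to the same ambient $\PG(\F_q^n)$ so that a single $\Psi\in\mathrm{PGL}(n,q)$ suffices. With that verified, the statement follows immediately.
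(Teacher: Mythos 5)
Your proposal is correct and matches the paper's own argument: the paper likewise writes down the two associated point sets and observes that the diagonal matrix $\mathrm{diag}(b_1,\ldots,b_n)$ induces the required element of $\mathrm{PGL}(n,q)$, then invokes the invariance under geometric $q$-equivalence. Your additional bookkeeping remarks (same prime set, invertibility from $b_i\in\F_q^{\times}$) are sound but do not change the route.
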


\begin{proof}
    The point sets in $\PG(\F_q^n)$ associated with $S$ and $S'$ are 
    \[
\mathcal{S} = \big\{ \langle ( \nu_{1j}, \nu_{2j}, \ldots, \nu_{nj} )\rangle_{\fq} \in\PG(\F_{q}^{n}) : j\in\{1, 2, \ldots, \ell\} \big\} \]
and
\[ \mathcal{S}' = \big\{\langle ( b_1\nu_{1j}, b_2\nu_{2j}, \ldots, b_n\nu_{nj} )\rangle_{\fq} \in\PG(\F_{q}^{n}) : j\in\{1, 2, \ldots, \ell\} \big\},
\]
respectively. The assertion follows by considering the element of $\mathrm{PGL}(n,q)$ induced by the diagonal matrix
\[
\begin{pmatrix}
    b_1 & 0 & \cdots & 0 \\
    0 & b_2 & \cdots & 0 \\
    \vdots &  & \ddots & \vdots \\
    0 & \cdots & 0 & b_n
\end{pmatrix}.
\]
\end{proof}

Now, we determine bounds on the size of the sets in $\mathcal{T}_{k,q}$. 
First, we will need the following classical bound on the size of a $k$-blocking set. 

\begin{proposition} [see \textnormal{\cite{bose1966characterization}}] \label{th:boundandclass}
    A $k$-blocking set of $\PG(\F_q^n)$ has at least $\frac{q^{k+1}-1}{q-1}$ points. In case of equality the blocking set is the point set of a $k$-space of $\PG(\F_q^n)$.
\end{proposition}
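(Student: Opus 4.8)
\textbf{Plan of proof for Proposition~\ref{th:boundandclass}.}
The statement is attributed to Bose--Burton, and the cleanest route is the classical counting/double-counting argument together with an extremal analysis. First I would reduce to the dual formulation: a set $\mathcal{S}\subseteq\PG(\F_q^n)$ is a $k$-blocking set precisely when its complement contains no $(n-1-k)$-dimensional projective subspace (equivalently, every codimension-$k$ subspace $W$ meets $\mathcal{S}$). So it suffices to show that if $|\mathcal{S}| < \frac{q^{k+1}-1}{q-1} = |\PG(\F_q^{k+1})| = [k+1]_q$, then some codimension-$k$ subspace avoids $\mathcal{S}$, and to pin down the equality case.

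The core step is a weighted counting of incidences between the points of $\mathcal{S}$ and the codimension-$k$ subspaces of $\PG(\F_q^n)$. Let $m = |\mathcal{S}|$. The number of codimension-$k$ subspaces through a fixed point equals the number of codimension-$k$ subspaces of $\PG(\F_q^{n-1})$ (quotient by the point), a fixed Gaussian-binomial quantity; call it $B$. Hence the number of incident pairs $(P,W)$ with $P\in\mathcal{S}$ is exactly $mB$. If every codimension-$k$ subspace met $\mathcal{S}$, then by averaging some such subspace $W$ would contain at most $mB/(\text{number of codim-}k\text{ subspaces})$ points of $\mathcal{S}$; but I want the opposite inequality, so instead I would argue by a direct combinatorial induction on $k$ (or on $n$): project from a point $P_0\in\mathcal{S}$ and show that the image of $\mathcal{S}\setminus\{P_0\}$ in $\PG(\F_q^{n-1})$ must still be a $(k-1)$-blocking set, losing at least $q^k$ points in the process (the points of $\mathcal{S}$ on the lines through $P_0$ that collapse), yielding $|\mathcal{S}| \geq q^k + \frac{q^k-1}{q-1} = \frac{q^{k+1}-1}{q-1}$ by the inductive hypothesis. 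The base case $k=0$ is trivial: a $0$-blocking set must be all of $\PG(\F_q^n)$'s point set meeting every hyperplane... actually the clean base is $k$ such that $n = k+1$, where a $k$-blocking set is just a set meeting every point, i.e.\ all of $\PG(\F_q^{k+1})$, of size exactly $[k+1]_q$.

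For the equality case: if $|\mathcal{S}| = \frac{q^{k+1}-1}{q-1}$, the induction must be tight at every stage, which forces, after projecting from any $P_0 \in \mathcal{S}$, that exactly $q^k$ points lie on the lines through $P_0$ meeting $\mathcal{S}$ and that these fill up a $k$-space through $P_0$; chasing this back shows $\mathcal{S}$ is contained in — hence equals, by cardinality — the point set of a single $k$-space. The main obstacle is bookkeeping the induction cleanly: making precise the claim that projection from a point of a minimal $k$-blocking set yields a $(k-1)$-blocking set and that the fibre contributes a full $k$-space's worth of ``new'' points in the extremal case. Since this is a classical result with a standard proof, I would either reproduce this short induction or simply cite \cite{bose1966characterization} (and \cite{blokhuis2011blocking} for a modern treatment) and omit the details, as the paper already does.
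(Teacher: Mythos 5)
This proposition is the classical Bose--Burton theorem, and the paper offers no proof of it at all: it simply cites \cite{bose1966characterization}. Your closing suggestion to cite the reference therefore matches the paper exactly, and to that extent there is nothing to compare. However, since you do sketch an argument, I should point out that its inductive step is genuinely broken. Projecting $\mathcal{S}\setminus\{P_0\}$ from a point $P_0\in\mathcal{S}$ does yield a $(k-1)$-blocking set of the quotient $\PG(\F_q^{n-1})$ (that part is fine), but the claim that ``at least $q^k$ points are lost,'' i.e.\ that $\lvert\mathcal{S}\rvert \geq q^k + \lvert\pi(\mathcal{S}\setminus\{P_0\})\rvert$, is unjustified and false in general. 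For example, take $k=1$, $n=3$ and $\mathcal{S}$ a projective triangle of size $3(q+1)/2$ with $P_0$ a non-vertex point: the image of the projection can have size close to $q$, so the right-hand side is close to $2q$, which exceeds $3(q+1)/2$ for $q>3$. The fibres of the projection from an internal point need not be large, so the induction on $k$ collapses to the useless bound $\lvert\mathcal{S}\rvert\geq 1+\frac{q^k-1}{q-1}$.

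Two standard repairs exist, and your own base case ($n=k+1$, where a $k$-blocking set must be all of $\PG(\F_q^{k+1})$) is the right one for the first: induct on $n$ with $k$ fixed, projecting from a point $P_0\notin\mathcal{S}$; every codimension-$k$ subspace of the quotient lifts to a codimension-$k$ subspace of $\F_q^n$ containing $P_0$, which meets $\mathcal{S}$ away from $P_0$, so the image is again a $k$-blocking set of $\PG(\F_q^{n-1})$ of size at most $\lvert\mathcal{S}\rvert$. Alternatively, one argues directly: if $\lvert\mathcal{S}\rvert<\frac{q^{k+1}-1}{q-1}$, build a projective subspace disjoint from $\mathcal{S}$ greedily, extending a disjoint $i$-space $\Pi_i$ to a disjoint $(i+1)$-space by noting that the $(i+1)$-spaces through $\Pi_i$ partition the points outside $\Pi_i$ into $\frac{q^{n-1-i}-1}{q-1}\geq\frac{q^{k+1}-1}{q-1}>\lvert\mathcal{S}\rvert$ classes, until one reaches an $(n-k-1)$-space avoiding $\mathcal{S}$. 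The equality case then follows by showing that any line through two points of $\mathcal{S}$ lies entirely in $\mathcal{S}$ (otherwise a disjoint $(n-k-2)$-space through a missing point of that line spans, with the line, a codimension-$k$ subspace meeting $\mathcal{S}$ twice, contradicting a counting forced by equality). Either route is short, but neither is the one you sketched; if you do not want to fill in these details, citing \cite{bose1966characterization} as the paper does is the appropriate choice.
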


We immediately obtain the following result through combination of Proposition \ref{th:boundandclass} and Theorem \ref{mainresult}. This corollary also appears in \cite[Theorem 5]{skalba2004alternatives}.

\begin{proposition}
\label{lowerbound}
Let $S $ be a finite subset of integers not containing a perfect $q^{th}$ power. Assume that $S \in \mathcal{T}_{k,q}$. Then 
\[
\lvert S \rvert \geq \frac{q^{k+1}-1}{q-1} = q^{k} + q^{k-1} + \ldots + 1 .
\]
\end{proposition}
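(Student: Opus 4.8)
The plan is to simply combine the two ingredients that are already available. By Proposition~\ref{numberofprimes}, since $S\in\mathcal{T}_{k,q}$ we have $n\geq k+1$, so that $\PG(\F_q^n)$ actually admits subspaces of codimension $k$ and the notion of a $k$-blocking set is meaningful. By Theorem~\ref{mainresult}, the point set $\mathcal{S}\subseteq\PG(\F_q^n)$ associated with $S$ is a $k$-blocking set of $\PG(\F_q^n)$. Proposition~\ref{th:boundandclass} then gives $|\mathcal{S}|\geq\frac{q^{k+1}-1}{q-1}$.

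The only remaining point is to pass from $|\mathcal{S}|$ back to $|S|$. First I would reduce, exactly as in the discussion preceding Lemma~\ref{characterlemma}, to the case where the elements of $S$ are positive and $q$-free, since replacing each $s_j$ by $\rad(|s_j|)$ changes neither $|S|$ nor membership in $\mathcal{T}_{k,q}$ (and can only decrease $|S|$ if collisions occur, which is harmless for a lower bound). Then the map $\pi_q\colon S\to\F_q^n\setminus\{0\}$ sending $s_j$ to its exponent vector is injective on a $q$-free set, and each nonzero vector determines a unique projective point only up to the $\F_q^\times$-action; hence $|\mathcal{S}|\leq|\pi_q(S)|=|S|$. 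Combining, $|S|\geq|\mathcal{S}|\geq\frac{q^{k+1}-1}{q-1}=q^k+q^{k-1}+\cdots+1$, which is the claim.

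There is essentially no obstacle here: the statement is a direct corollary, and the only thing requiring a sentence of care is the inequality $|S|\geq|\mathcal{S}|$, i.e.\ checking that distinct projective points of $\mathcal{S}$ come from distinct elements of $S$ — which is immediate once one works with $q$-free representatives, because then $\pi_q$ is injective and the quotient map $\F_q^n\setminus\{0\}\to\PG(\F_q^n)$ only identifies proportional vectors. I would therefore present the proof in three short lines: invoke Proposition~\ref{numberofprimes} for $n\geq k+1$, invoke Theorem~\ref{mainresult} to get that $\mathcal{S}$ is a $k$-blocking set, and invoke Proposition~\ref{th:boundandclass} together with $|S|\geq|\mathcal{S}|$ to conclude.
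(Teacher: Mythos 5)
Your proposal is correct and follows exactly the route the paper takes: the paper derives this proposition as an immediate combination of Theorem~\ref{mainresult} (the associated point set $\mathcal{S}$ is a $k$-blocking set) with Proposition~\ref{th:boundandclass} (the Bose--Burton bound), and your additional care about $n\geq k+1$ via Proposition~\ref{numberofprimes} and the inequality $\lvert S\rvert\geq\lvert\mathcal{S}\rvert$ simply makes explicit what the paper leaves implicit.
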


An interesting consequence of the Proposition \ref{lowerbound} is the following, which states that only way for a subset of smaller cardinality (than the lower bound above) to be in $\mathcal{T}_{k,q}$ is the trivial way by already containing a perfect $q^{th}$ power. 

\begin{corollary} [see \textnormal{\cite[Theorem 5]{skalba2004alternatives}}]
Let $S$ be a finite subset of integers with $|S| \leq q^{k} + q^{k-1} + \ldots + q$. Then, $S \in\mathcal{T}_{k,q}$ if and only if $S$ contains a perfect $q^{th}$ power. 
\end{corollary}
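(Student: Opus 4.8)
The plan is to derive this corollary directly from Proposition \ref{lowerbound} together with the trivial observation that a set containing a perfect $q^{th}$ power automatically lies in $\mathcal{T}_{k,q}$. The two directions are of very different natures, so I would treat them separately.

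For the easy direction, suppose $S$ contains a perfect $q^{th}$ power, say $s_0 = r^q \in S$ with $r \in \mathbb{Z}$. Then for \emph{every} natural number $N$ coprime to $r$ (in particular for every $N$ with $\Omega(N) \le k$ once we exclude the finitely many primes dividing $r$), the congruence $x^q \equiv s_0 \pmod N$ has the solution $x = r$; hence $S$ contains a $q^{th}$ power modulo $N$. Choosing $\Delta$ to be (a suitable power of) $r$ in Definition \ref{Delta}, this shows $S \in \mathcal{T}_{k,q}$ with no hypothesis on $|S|$ needed. Note this half does not even use the cardinality bound $|S| \le q^k + \dots + q$.

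For the converse, suppose $S \in \mathcal{T}_{k,q}$ and $|S| \le q^k + q^{k-1} + \dots + q$. If $S$ already contains a perfect $q^{th}$ power we are done, so assume for contradiction that it does not. Then $S$ is a finite subset of integers not containing a perfect $q^{th}$ power, so Proposition \ref{lowerbound} applies and gives
\[
|S| \ge \frac{q^{k+1}-1}{q-1} = q^k + q^{k-1} + \dots + q + 1,
\]
which contradicts $|S| \le q^k + q^{k-1} + \dots + q$. Hence $S$ must contain a perfect $q^{th}$ power, completing the proof.

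I do not anticipate any genuine obstacle here: the corollary is essentially a restatement of Proposition \ref{lowerbound} via its contrapositive, packaged as a biconditional by appending the trivial sufficiency direction. The only point requiring a word of care is that Proposition \ref{lowerbound} is stated for sets \emph{not} containing a perfect $q^{th}$ power, which is exactly why the contradiction hypothesis ``$S$ does not contain a perfect $q^{th}$ power'' is what unlocks the bound; one should also remember that the reduction to positive $q$-free representatives (as in Section \ref{ProjectiveIdentification}) is harmless, since modifying elements of $S$ by perfect $q^{th}$ powers changes neither membership in $\mathcal{T}_{k,q}$ nor the property of containing a perfect $q^{th}$ power.
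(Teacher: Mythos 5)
Your proposal is correct and follows exactly the route the paper intends: the corollary is stated there as an immediate consequence of Proposition \ref{lowerbound}, with the forward direction being trivial (a perfect $q^{th}$ power $r^q$ is a $q^{th}$ power modulo every $N$, so coprimality to $r$ is not even needed) and the converse being the contrapositive of the lower bound. No gaps; nothing further to add.
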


In addition to obtaining lower bounds on their cardinalities, another advantage of studying sets in $\mathcal{T}_{k, q}$ geometrically is that we can completely outline the factorization pattern of elements of the set $S \in\mathcal{T}_{k,q}$ that attain the lower bound. 


\begin{proposition} \label{th:classification}
    Let $S=\{s_j\}_{j=1}^{\ell}$ be set of integers not containing a perfect $q^{th}$ power with $\ell=q^k+\cdots+q+1$. 
    Let $n$ be the number of distinct primes that divide the $\prod_{j=1}^{\ell}\rad(\lvert s_j\rvert)$. Let $\mathcal{S} \subseteq \PG(\F_q^n)$ be the set of projective points associated with $S$ as in \ref{ProjectiveIdentification}.
    Then $S \in \mathcal{T}_{k,q}$ if and only if $\mathcal{S}$ is a $k$-space of $\PG(\F_{q}^{n})$. In such a case, $S$ is geometrically $q$-equivalent to the set
        \begin{multline*} \label{lineexample}
        \overline{S}=\{\overline{p}_1\overline{p}_2^{\alpha_2}\cdots\overline{p}_{k+1}^{\alpha_{k+1}} \colon \alpha_i \in \F_q\} \\ \bigcup \{\overline{p}_2\overline{p}_3^{\alpha_3}\cdots\overline{p}_{k+1}^{\alpha_{k+1}} \colon  \alpha_i \in \F_q\} \cdots \\
         \cdots
         \bigcup \{\overline{p}_k\overline{p}_{k+1}^{\alpha_{k+1}} \colon  \alpha_i \in \F_q\} 
         \bigcup \{\overline{p}_{k+1}\}
        \end{multline*}
    for every $k+1$ distinct primes $\overline{p}_1,\ldots,\overline{p}_{k+1}$. 
\end{proposition}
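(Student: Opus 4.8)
The plan is to read the statement off from \Cref{mainresult} together with the classical size bound of \Cref{th:boundandclass}, and then to identify the normal form $\overline{S}$ with a coordinate $k$-space and appeal to the transitivity of $\mathrm{PGL}$ on subspaces of a fixed dimension.

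For the first assertion, suppose $S\in\mathcal{T}_{k,q}$. By \Cref{mainresult} the point set $\mathcal{S}\subseteq\PG(\F_q^n)$ associated with $S$ is a $k$-blocking set, so \Cref{th:boundandclass} gives $|\mathcal{S}|\ge\frac{q^{k+1}-1}{q-1}=\ell$. Since $\mathcal{S}$ is, by construction, the image of the $\ell$-element set $S$ under $s\mapsto\langle\pi_q(s)\rangle$, we also have $|\mathcal{S}|\le\ell$; hence $|\mathcal{S}|=\frac{q^{k+1}-1}{q-1}$, and the equality clause of \Cref{th:boundandclass} forces $\mathcal{S}$ to be a $k$-space of $\PG(\F_q^n)$ (in particular $n\ge k+1$, as also follows from \Cref{numberofprimes}). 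Conversely, if $\mathcal{S}=\PG(W)$ with $W\le\F_q^n$ of dimension $k+1$, then for every subspace $U\le\F_q^n$ of codimension $k$ the dimension (Grassmann) formula yields $\dim_{\F_q}(W\cap U)\ge(k+1)+(n-k)-n=1$, so $\mathcal{S}\cap\PG(U)\ne\emptyset$. Thus $\mathcal{S}$ is a $k$-blocking set and \Cref{mainresult} gives $S\in\mathcal{T}_{k,q}$.

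For the normal form, I would first check that the point set associated with $\overline{S}$ relative to the primes $\overline{p}_1,\dots,\overline{p}_{k+1}$ is exactly the coordinate $k$-space $\PG(\langle e_1,\dots,e_{k+1}\rangle)$ of $\PG(\F_q^{k+1})$: the $i$-th block $\{\overline{p}_i\overline{p}_{i+1}^{\alpha_{i+1}}\cdots\overline{p}_{k+1}^{\alpha_{k+1}}:\alpha_j\in\F_q\}$ of $\overline{S}$ contributes precisely the points whose coordinate vector has the form $(0,\dots,0,1,\ast,\dots,\ast)$ with leading $1$ in position $i$, and as $i$ runs through $1,\dots,k+1$ these give, without repetition, all $q^k+q^{k-1}+\cdots+1=\ell$ points of that $k$-space. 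Now assume $S\in\mathcal{T}_{k,q}$, so $\mathcal{S}$ is a $k$-space by the first part. Passing to the common set of primes $p_1,\dots,p_N$ dividing $\prod_j\rad(\lvert s_j\rvert)\cdot\prod_i\overline{p}_i$, as required by \Cref{def:geometricqequiv}, amounts to inserting some zero coordinates into the vectors representing $\mathcal{S}$ and into those of $\PG(\langle e_1,\dots,e_{k+1}\rangle)$; inserting zero coordinates does not change the dimension of the subspace spanned (cf.\ \cite[Proposition 2.3]{blokhuis2011blocking}), so both resulting point sets are $k$-spaces of $\PG(\F_q^N)$. Since $\mathrm{GL}(N,q)$ acts transitively on $(k+1)$-dimensional subspaces of $\F_q^N$, there is $\Psi\in\mathrm{PGL}(N,q)$ mapping the point set of $S$ onto that of $\overline{S}$, which is precisely the assertion that $S$ and $\overline{S}$ are geometric $q$-equivalent.

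The computations involved — the two dimension counts and the unique-leading-coefficient normal form of projective points — are routine. The only step requiring some care is the last one: checking that reducing to the common prime set of $S$ and $\overline{S}$ merely pads coordinate vectors with zeros and therefore keeps the associated point sets $k$-spaces of $\PG(\F_q^N)$; granting that, transitivity of $\mathrm{PGL}(N,q)$ on $k$-spaces closes the argument, and no genuine obstacle arises.
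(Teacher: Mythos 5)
Your proof is correct and follows essentially the same route as the paper: \Cref{mainresult} combined with the Bose--Burton bound of \Cref{th:boundandclass} forces $\mathcal{S}$ to be a $k$-space, and the normal form is obtained by identifying $\overline{S}$ with a coordinate $k$-space and invoking transitivity of $\mathrm{PGL}$ on $k$-spaces. The only (harmless) difference is that you work directly in the common prime set of $S$ and $\overline{S}$, whereas the paper first takes the $\overline{p}_i$ disjoint from the primes of $S$ and then appeals to the switching-of-primes part of \Cref{cor:indendenceprimes} to cover arbitrary choices of $\overline{p}_1,\ldots,\overline{p}_{k+1}$.
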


\begin{proof}
By using \Cref{mainresult}, we know that, $S \in \mathcal{T}_{k,q}$ if and only if 
$\mathcal{S}$ is a $k$-blocking set of $\PG(\F_{q}^{n})$. Moreover, by hypothesis we have 
\[
q^k+\cdots +q +1 = \lvert S \rvert \geq \lvert \mathcal{S} \rvert \geq q^k+\cdots +q +1.
\]
where the last inequality follows from \Cref{th:boundandclass}. So this is equivalent to say that $\mathcal{S}$ is a $k$-blocking set of $\PG(\F_q^n)$ having size $q^k+\cdots+q+1$, or in other words, by \Cref{th:boundandclass}, $\mathcal{S}$ is a $k$-space. This proves the first part of the assertion. \\
Now, assume that $p_1,\ldots,p_n$ are all the distinct primes that divide the $\prod_{j=1}^{\ell}\rad(\lvert s_j\rvert)$. Let $\overline{p}_1,\ldots,\overline{p}_{k+1}$ be $k+1$ distinct primes that does not divide the $\prod_{j=1}^{\ell}\rad(\lvert s_j\rvert)$ and consider $\overline{S}$ as in the statement. Then $\overline{p}_1,\ldots,\overline{p}_{k+1},p_1,\ldots,p_n$ are all the distinct primes that divides $\prod_{\overline{s} \in \overline{S}}\rad(\lvert \overline{s}\rvert)\prod_{j=1}^{\ell}\rad(\lvert s_j\rvert)$. The set of points associated with $\overline{S}$ with respect to $\overline{p}_1,\ldots,\overline{p}_{k+1},p_1,\ldots,p_n$ is $\PG(U) \subseteq \PG(\F_q^{n+k+1})$, where $U$ is the $\F_q$-vector space generated by
\[
(1,0,\ldots,0), (0,1,0,\ldots,0),\ldots,(0,\ldots,0,\underbrace{1}_{k+1},0\ldots ) \in \F_q^{n+k+1}. 
\] 
On the other hand, the set of points associated with $S$ with respect to $\overline{p}_1,\ldots,\overline{p}_{k+1},p_1,\ldots,p_n$ is $\PG(W) \subseteq \PG(\F_q^{n+k+1})$, where $W$ is the $\F_q$-vector space of dimension $k+1$ contained in $\{0\}^{k+1} \times \F_q^n$. Since $\PG(U)$ and $\PG(W)$ are $k$-spaces of $\PG(\F_q^{k+1+n})$, there exists an element of $\PG(k+1+n,q)$ mapping $\PG(U)$ in $\PG(W)$. Hence $S$ is geometric $q$-equivalent to $\overline{S}$.
The assertion follows from $(2)$ of \Cref{cor:indendenceprimes}.
\end{proof}

Now we present two examples that demonstrate how \Cref{th:classification} can be employed to construct minimum-size sets in $\mathcal{T}_{k,q}$ and to establish when sets of size $q^{k-1}+\cdots+q+1$ do not belong to $\mathcal{T}_{k,q}$.

\begin{example}
Let $q=3$, and consider $k=2$. As proved in \Cref{th:classification}, the ``standard" set in $\mathcal{T}_{2,3}$ having minimum size $q^2+q+1=13$ is given by 
\[
\{p_1,p_2,p_1p_2,p_1p_2^2,p_3,p_1p_3,p_1p_3^2,p_2p_3,p_2p_3^2,p_1p_2p_3,p_1p_2^2p_3,p_1p_2p_3^2,p_1p_2^2,p_3^2\},
\]
where $p_1,p_2,p_3$ are any distinct primes. However, the classification in \Cref{th:classification} provides many more examples of sets in $\mathcal{T}_{2,3}$ having minimum size 13. For instance, let us consider the projective space $\PG(\F_3^4)$. We can consider the $2$-space $\PG(W)$, where $W=\langle (1,0,0,0), (0,1,1,1), (0,1,0,2) \rangle_{\F_q}$. 
So,
\[
\PG(W)=\left\{ \begin{array}{cccc}
     \langle (1,0, 0,0) \rangle_{\fq}, & \langle (0,1,1,1)\rangle_{\fq}, &\langle (0, 1,0, 2) \rangle_{\fq}, & \langle (1,1,1,1)\rangle_{\fq}, \\
     \langle (1, 2, 2,2) \rangle_{\fq},  &
\langle (1,1, 0,2) \rangle_{\fq}, & \langle (1,2,0,1)\rangle_{\fq}, & \langle (0,2,1,0)\rangle_{\fq}, \\ \langle (0, 0,1, 2) \rangle_{\fq}, &\langle (1, 2, 1,0) \rangle_{\fq}, &
\langle (1,0, 1,2) \rangle_{\fq}, & \langle (1,0,2,1)\rangle_{\fq},  \\
\langle (1,1,2,0)\rangle_{\fq} &&&
\end{array}\right\} \subseteq \PG(\F_3^4).
\]
Therefore, by using \Cref{th:classification}, for any distinct primes $p_1,p_2,p_3,p_4$, the set
\[
S=\{p_1,p_2p_3p_4,p_2p_4^2,p_1p_2p_3p_4,p_1p_2^2p_3^2p_4^2,p_1p_2p_4^2,p_1p_2^2p_4,p_2^2p_3,p_3p_4^2,p_1p_2^2p_3,p_1p_3p_4^2,p_1p_3^2p_4,p_1p_2p_3^2\}
\] 
belongs to $\mathcal{T}_{2,3}$.
\end{example}

Also, in the case where $\lvert S \rvert \geq q^{k} + q^{k-1} + \ldots + q + 1$, the geometric connection between elements of $\mathcal{T}_{k,q}$ and a $k$-blocking set can be employed to show that a given set is not in $\mathcal{T}_{k,q}$. Consider the following example.

\begin{example} \label{ex:moreprimes}
Suppose we want to investigate whether the set \[
    \begin{array}{rl}
          S& =\{2,3,5,6,
          7,10,14,15,
          20,35,42,50,180\} \\
         & =\{2,3,5,2\cdot 3,7,2 \cdot 5,2\cdot 7, 3 \cdot 5,2^2 \cdot 5, 3 \cdot 7,2 \cdot 3 \cdot 7, 2 \cdot 5^2, 2^2 \cdot 3^2 \cdot 5\},
    \end{array}
   \]
   belongs to $\mathcal{T}_{2,3}$. The set of all primes that divide an element of $S$ is $\{2, 3,5,7\}$. The point set associated with $S$ as in \Cref{ProjectiveIdentification} is 
    \[
\mathcal{S}= \left\{ \begin{array}{cccc}
     \langle (1,0, 0,0) \rangle_{\fq}, & \langle (0,1,0,0)\rangle_{\fq}, &\langle (0, 0,1, 0) \rangle_{\fq}, &
     \langle (1, 1, 0,0) \rangle_{\fq},  \\  
\langle (0,0, 0,1) \rangle_{\fq}, & \langle (1,0,1,0)\rangle_{\fq}, & \langle (1, 0,0, 1) \rangle_{\fq}, &\langle (0, 1, 1,0) \rangle_{\fq},\\
\langle (2,0, 0,1) \rangle_{\fq}, & \langle (0,0,1,1)\rangle_{\fq}, & \langle (1, 1,0, 1) \rangle_{\fq}, &\langle (1, 0,2,0) \rangle_{\fq}, \\
\langle (2, 2,1,0) \rangle_{\fq} &&&
\end{array}\right\} \subseteq \PG(\F_3^4).
\]
Note that $\lvert S \rvert=\lvert \mathcal{S} \rvert = 13$. Therefore, by \Cref{th:classification}, we get that $S \in \mathcal{T}_{2,3}$ if and only if $\mathcal{S}$ is a plane of $\PG(\F_3^4)$. Observe that \[
\langle (1,0, 0,0) \rangle_{\fq}, \langle (0,1,0,0)\rangle_{\fq}, \langle (0, 0,1, 0) \rangle_{\fq},  
\langle (0,0, 0,1) \rangle_{\fq} \in \mathcal{S},\] implying that $\mathcal{S}$ cannot be a plane. Therefore $S \notin \mathcal{T}_{2,3}$
\end{example}

Proposition \ref{th:classification} shows that the smallest non-trivial set $S$ in the collection $\mathcal{T}_{k,q}$ is of size $\frac{q^{k+1} - 1}{q^{k} - 1}$. Note that for every non-trivial $S \in \mathcal{T}_{k,q}$ and any integer $a$, we also have $S \cup \{a\} \in\mathcal{T}_{k,q}$. In general, for every superset $T$ of $S$, $T \in\mathcal{T}_{k,q}$. Therefore, to avoid redundancies in classification, we introduce the following definition. 

\begin{definition}
A set $S \in \mathcal{T}_{k,q}$ will be called \textbf{minimal} if there does not exist a  proper subset $T \subset S$ such that $T \in \mathcal{T}_{k,q}$. 
\end{definition}

A set $S \in\mathcal{T}_{k,q}$ that achieves the lower bound $\lvert S \rvert = \frac{q^{k+1} - 1}{q - 1}$ and does not contain a perfect $q^{th}$ power is clearly minimal as a consequence of Proposition \ref{lowerbound}. Interestingly, there are no minimal sets in $\mathcal{T}_{k,q}$ of cardinality $\frac{q^{k+1} - 1}{q - 1} + 1$ or $\frac{q^{k+1} - 1}{q - 1} + 2$. As before, the next cardinality of a minimal set in $\mathcal{T}_{k,q}$ will be implied by the corresponding result about $k$-blocking sets. 

\begin{definition}
    Let $B$ be a set of points in $\PG(\F_q^n)$ and $\Lambda$ be a subspace such that $\Lambda \cap B = \emptyset$.  The \emph{cone} with vertex $\Lambda$ and base $B$ is the union of $\Lambda$ and the subspaces generated by $\Lambda$ and $P$, with $P \in B$.
\end{definition}

\begin{proposition} 
[see \cite{heim1996blockierende}] 
\label{th:secondsmallest}
    Let $n, k$ be natural number with $n > k$ and $n \geq 3$ and let $\mathcal{S}$ be a $k$-blocking set of $\PG(\F_q^n)$ not containing a $k$-space in $\PG(\F_q^n)$. Then, \[\lvert \mathcal{S} \rvert \geq \frac{q^{k+1}-1}{q-1}+q^{k-1}\frac{q+1}{2}.\]  Furthermore, the equality is achieved above if and only if $\mathcal{S}$ is a cone with vertex a $(k-2)$-space $\Lambda$ and as base a blocking set $\overline{\mathcal{S}}$ of a plane $\Sigma$ such that $\Lambda \cap \Sigma = \emptyset$ and $\lvert \overline{\mathcal{S}} \rvert = 3 \frac{q+1}{2}$.
\end{proposition}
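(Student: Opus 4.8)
The plan is to prove the bound and the characterization simultaneously by induction on $k$, the key observation being that if $\mathcal S$ is a $k$-blocking set of $\PG(\F_q^n)$ then for every hyperplane $H$ the section $\mathcal S\cap H$ is a $(k-1)$-blocking set of $H\cong\PG(\F_q^{n-1})$: a codimension-$(k-1)$ subspace of $H$ is a codimension-$k$ subspace of $\PG(\F_q^n)$, so it meets $\mathcal S$, and it lies in $H$, so it meets $\mathcal S\cap H$.

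For the base case $k=1$ one must show that a point set $\mathcal S\subseteq\PG(\F_q^n)$ meeting every hyperplane and containing no line has at least $3\frac{q+1}{2}$ points, with equality only if $\mathcal S$ spans a plane $\Sigma$ and is a minimum nontrivial blocking set of $\Sigma$. I would first reduce to the planar case: projecting $\mathcal S$ from points not on it yields a hyperplane-blocking set in each smaller projective space of size at most $\lvert\mathcal S\rvert$, and a standard reduction shows a hyperplane-blocking set this small is forced into a plane; inside that plane $\mathcal S$ is a blocking set with no line, and the sharp bound $\lvert\mathcal S\rvert\ge 3\frac{q+1}{2}$ together with the classification of the extremal configuration as the projective triangle is precisely Blokhuis' theorem on blocking sets of $\PG(2,p)$ for $p$ prime — which applies since $q$ is prime throughout this paper — so I would invoke it. This identifies the extremal $\mathcal S$ as a cone with empty vertex over a planar blocking set of size $3\frac{q+1}{2}$, matching the statement with $k=1$.

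For the inductive step, assume the statement for $k-1$ and let $\mathcal S$ be a $k$-blocking set of $\PG(\F_q^n)$, $n>k\ge 2$, containing no $k$-space. If no hyperplane section $\mathcal S\cap H$ contains a $(k-1)$-space, then by induction $\lvert\mathcal S\cap H\rvert\ge \frac{q^{k}-1}{q-1}+q^{k-2}\frac{q+1}{2}=:m$ for all $H$; summing the identity $\sum_H\lvert\mathcal S\cap H\rvert=\lvert\mathcal S\rvert\,\frac{q^{n}-1}{q-1}$ over the $\frac{q^{n+1}-1}{q-1}$ hyperplanes gives $\lvert\mathcal S\rvert\ge\frac{q^{n+1}-1}{q^{n}-1}\,m>qm=\frac{q^{k+1}-1}{q-1}+q^{k-1}\frac{q+1}{2}-1$, hence $\lvert\mathcal S\rvert\ge\frac{q^{k+1}-1}{q-1}+q^{k-1}\frac{q+1}{2}$ by integrality. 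Otherwise $\mathcal S$ contains a $(k-1)$-space $L_0$; one then studies the hyperplanes through $L_0$ — on each such hyperplane $\mathcal S$ is a $(k-1)$-blocking set containing $L_0$, and quotienting out $L_0$ reduces the count to a smaller instance — to reach the same bound. For the equality case, a refinement of the averaging shows that an extremal $\mathcal S$ must contain a $(k-1)$-space $L_0$; all the inequalities in the analysis around $L_0$ then collapse, and one reconstructs a $(k-2)$-subspace $\Lambda\subseteq L_0$ (the common intersection of the $(k-1)$-spaces contained in $\mathcal S$) as the vertex, a plane $\Sigma$ skew to $\Lambda$, and a minimum nontrivial blocking set $\overline{\mathcal S}\subseteq\Sigma$ with $\lvert\overline{\mathcal S}\rvert=3\frac{q+1}{2}$ as the base, exhibiting $\mathcal S$ as the claimed cone. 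The converse — that such a cone is a $k$-blocking set with no $k$-space and exactly $\frac{q^{k+1}-1}{q-1}+q^{k-1}\frac{q+1}{2}$ points — is a direct verification.

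The main obstacle is twofold. The base case is not elementary: the sharp planar bound $3\frac{q+1}{2}$ and the uniqueness of the projective triangle rest on Blokhuis' polynomial method and are a substantial external input rather than something one derives on the spot. Within the induction, the delicate part is the equality analysis in the case where $\mathcal S$ contains a $(k-1)$-space: one must rule out the pure averaging scenario at equality and then show that tightness propagates all the way down so as to force the exact cone structure — the correct vertex dimension, the skewness of the base plane, and the base being an extremal planar blocking set — rather than merely the right cardinality.
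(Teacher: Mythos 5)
First, a point of reference: the paper does not prove this proposition at all — it is imported verbatim from Heim's 1996 work on blocking sets (together, implicitly, with Blokhuis' planar bound $3\frac{q+1}{2}$, which is why the hypothesis that $q$ is prime matters). So there is no in-paper argument to compare against; your attempt has to stand on its own as a reconstruction of a nontrivial result from the Galois-geometry literature.

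As such a reconstruction, your outline has the right skeleton — the hyperplane-section lemma is correct, the averaging computation in the case where $\mathcal{S}$ contains no $(k-1)$-space does yield $\lvert\mathcal{S}\rvert > qm$ and hence the bound by integrality, and you correctly identify Blokhuis' theorem as the indispensable external input for the planar case. But two genuine gaps remain. The first is the case where $\mathcal{S}$ does contain a $(k-1)$-space $L_0$: ``quotienting out $L_0$ reduces the count to a smaller instance'' is a plan, not an argument. The quotient of $\mathcal{S}$ by $L_0$ need not be a blocking set of the right kind in the quotient space, and the points of $\mathcal{S}$ off $L_0$ are not controlled by the induction hypothesis in any obvious way; this is precisely where Heim's proof does real work, and your sketch does not supply it. The second gap is the entire equality characterization — ruling out equality in the averaging case, locating the vertex $\Lambda$ as a $(k-2)$-space, producing a plane $\Sigma$ skew to it, and showing the base is a planar blocking set of size exactly $3\frac{q+1}{2}$ — which you describe as something ``one reconstructs'' rather than prove. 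Two smaller inaccuracies: Blokhuis' theorem gives the lower bound $3\frac{p+1}{2}$, not the uniqueness of the projective triangle (and the proposition does not require the base to be a projective triangle, only to have that size); and your induction silently needs the base case to cover $(k-1)$-blocking sets of a projective plane when $n-1=2$, which falls outside the stated hypothesis $n\geq 3$ and must be handled by the planar theorem directly.
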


By making use of the geometric characterization of sets in $\mathcal{T}_{k,q}$ provided by Theorem \ref{mainresult}, and by employing the bounds on the size of $k$-blocking sets recalled in Proposition \ref{th:secondsmallest}, we are able to prove the following bounds and existence results for sets in $\mathcal{T}_{k,q}$.

\begin{proposition}
    Let $S$ be set of integers not containing a perfect $q^{th}$ power such that $S \in \mathcal{T}_{k,q}$ and $S$ is minimal. 
    Then, \[\lvert S \rvert >q^k+ q^{k-1} \ldots +q+1 \text{   if and only if   }
    \lvert S \rvert \geq \frac{q^{k+1}-1}{q-1}+q^{k-1}\frac{q+1}{2}.
    \]
In other words, minimal $S$ in $\mathcal{T}_{k,q}$ with cardinality in the interval 
\begin{equation} \label{eq:secondinterval}
\left(\frac{q^{k+1}-1}{q-1},  \frac{q^{k+1}-1}{q-1}+q^{k-1}\cdot \frac{q+1}{2} \right)
\end{equation}
does not exist and any $S \in\mathcal{T}_{k,q}$ with cardinality in the interval as in \eqref{eq:secondinterval} is not minimal and contains a subset that is geometrically $q$-equivalent to that in Proposition \ref{th:classification}.
\end{proposition}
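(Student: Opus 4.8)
The plan is to push everything through Theorem~\ref{mainresult} and then read off the conclusion from Proposition~\ref{th:secondsmallest}. Write $\mathcal{S}\subseteq\PG(\F_q^n)$ for the point set associated with $S$; by Theorem~\ref{mainresult} it is a $k$-blocking set, by Proposition~\ref{numberofprimes} we have $n\ge k+1$, and since $\mathcal{S}$ is the image of $S$ under $s\mapsto\langle\pi_q(s)\rangle$ we always have $|\mathcal{S}|\le|S|$, with equality forcing $\pi_q$ to be injective on $S$ up to scalars. The implication ``$|S|\ge\frac{q^{k+1}-1}{q-1}+q^{k-1}\frac{q+1}{2}$ $\Rightarrow$ $|S|>q^k+\cdots+q+1$'' is trivial since $q^{k-1}\frac{q+1}{2}>0$, so the real content lies in the converse and in the two ``in other words'' assertions.

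The single step doing all the work is the following: \emph{if $\mathcal{S}$ contains the point set of a $k$-space $\Pi$, then $S$ has a subset $T$ with $|T|=\frac{q^{k+1}-1}{q-1}$, with $T\in\mathcal{T}_{k,q}$, and with $T$ geometrically $q$-equivalent to the set $\overline{S}$ of Proposition~\ref{th:classification}.} To prove it, choose for each point $P\in\Pi$ an element $s_P\in S$ with $\langle\pi_q(s_P)\rangle=P$ (possible as $\Pi\subseteq\mathcal{S}$) and put $T=\{s_P:P\in\Pi\}$, so that $|T|=|\Pi|=\frac{q^{k+1}-1}{q-1}$. For every prime $p_i$ dividing no element of $T$, the $i$-th coordinate of every vector in $\pi_q(T)$, hence of every point of $\Pi$, is zero; discarding these coordinates identifies the point set of $T$ in the sense of~\ref{ProjectiveIdentification} with $\Pi$, still a $k$-space. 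Proposition~\ref{th:classification}, applied to $T$ (which has the right cardinality and contains no perfect $q$-th power), then gives both $T\in\mathcal{T}_{k,q}$ and the geometric $q$-equivalence with $\overline{S}$.

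From this the proposition is routine. For the nontrivial direction of the iff: if $S$ is minimal and $|S|>q^k+\cdots+q+1=\frac{q^{k+1}-1}{q-1}$, then $\mathcal{S}$ cannot contain a $k$-space, for otherwise the step above produces $T\subsetneq S$ (proper since $|T|=\frac{q^{k+1}-1}{q-1}<|S|$) with $T\in\mathcal{T}_{k,q}$, contradicting minimality; hence Proposition~\ref{th:secondsmallest} applies to the $k$-blocking set $\mathcal{S}$ and yields $|S|\ge|\mathcal{S}|\ge\frac{q^{k+1}-1}{q-1}+q^{k-1}\frac{q+1}{2}$. The nonexistence of a minimal $S$ with cardinality in the open interval~\eqref{eq:secondinterval} is then immediate from the iff. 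Finally, if $S\in\mathcal{T}_{k,q}$ has $|S|$ in that open interval, then $|\mathcal{S}|\le|S|<\frac{q^{k+1}-1}{q-1}+q^{k-1}\frac{q+1}{2}$, so by the contrapositive of Proposition~\ref{th:secondsmallest} the $k$-blocking set $\mathcal{S}$ must contain a $k$-space; the step above then exhibits $T\subsetneq S$ with $T\in\mathcal{T}_{k,q}$ (proper because $|S|$ exceeds the left endpoint of the interval) that is geometrically $q$-equivalent to $\overline{S}$, so in particular $S$ is not minimal.

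The mathematics here is entirely carried by Propositions~\ref{th:classification} and~\ref{th:secondsmallest}; the only places I expect to have to be careful are bookkeeping ones. First, $|\mathcal{S}|$ can be strictly smaller than $|S|$ because $\pi_q$ collapses integers differing by a perfect $q$-th power, so one must track the inequality $|S|\ge|\mathcal{S}|$ and invoke equalities only at the endpoints of the interval. Second, Proposition~\ref{th:secondsmallest} needs $n>k$ and $n\ge 3$: the former is Proposition~\ref{numberofprimes}, and for the latter $k\ge 2$ already gives $n\ge k+1\ge 3$, while the leftover degenerate case $k=1$, $n=2$ is dispatched by hand, since the only $1$-blocking set of $\PG(\F_q^2)$ is the whole line, a $1$-space, forcing any minimal $S$ with $k=1$ and two prime divisors to have $|S|=q+1$, which makes the claimed equivalence vacuous there.
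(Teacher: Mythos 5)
Your proof is correct and follows essentially the same route as the paper: pass to the associated $k$-blocking set via Theorem~\ref{mainresult}, use minimality to rule out the presence of a $k$-space, and invoke Proposition~\ref{th:secondsmallest}. In fact you are somewhat more careful than the paper's own proof, which leaves implicit both the final clause (that a non-minimal $S$ in the interval contains a subset geometrically $q$-equivalent to the set of Proposition~\ref{th:classification}) and the degenerate case $k=1$, $n=2$ excluded by the hypothesis $n\geq 3$ of Proposition~\ref{th:secondsmallest}.
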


\begin{proof}
    Let $\mathcal{S} \subseteq \PG(\F_q^n)$ be the set of projective points associated with $S$, as in \ref{ProjectiveIdentification}. Since $S \in \mathcal{T}_{k,q}$, by \Cref{mainresult}, we have that $\mathcal{S}$ is a $k$-blocking set of $\PG(\F_q^n)$. Now, the set $S$ is minimal, from which it is easy to check that $\lvert S \rvert = \lvert \mathcal{S} \rvert$, and any proper subset of $\mathcal{S}$ is no longer a $k$-blocking set. Therefore, since the size of $S$ is greater than $q^k+ q^{k-1} \ldots +q+1$, we get that $\mathcal{S}$ cannot contain a $k$-space of $\PG(\F_q^n)$. Hence, by using \Cref{th:secondsmallest}, we get that
    \[
    \lvert S \rvert =\lvert \mathcal{S} \rvert \geq \frac{q^{k+1}-1}{q-1}+q^{k-1}\frac{q+1}{2}
    \]
    that proves our assertion.
\end{proof}

\subsection{Elements of $\mathcal{T}_{k,q}$ with Second Smallest Cardinality}
Using the equality case of Proposition \ref{th:secondsmallest}, we can construct minimal sets in $\mathcal{T}_{k,q}$ with cardinality $\frac{q^{k+1}-1}{q-1} + q^{k-1} \frac{q+1}{2}$, up to geometric $q$-equivalence, for every $k \geq 2$ and odd prime number $q$.

Let $Q$ be the set of quadratic residues of $\mathbb{F}_q$, i.e.
$Q:= \{a^2 \colon a \in \mathbb{F}_q^*\}$ and let $Q_{0} := Q \cup \{0\}$. Since $q$ is an odd prime, $\lvert Q_{0} \rvert = \frac{q+1}{2}$

\begin{proposition} [see \textnormal{\cite[Lemma 13.6 (i)]{hirschfeld1998projective}}] \label{prop:constructionprojectivetriangle}
Let $q$ be an odd prime. The point set
\begin{small}
\[
\overline{\mathcal{S}}=\{\langle (0, 1, -s) \rangle,\langle(-s, 0, 1)\rangle,\langle (1, -s, 0) \rangle \colon s \in Q_0\}
\]
\end{small} 
is a blocking set of $\PG(\F_q^3)$ having size $3\frac{q+1}{2}$.
\end{proposition}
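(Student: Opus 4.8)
The plan is to verify two things directly: that $\overline{\mathcal{S}}$ has the claimed size $3\frac{q+1}{2}$, and that it meets every line of $\PG(\F_q^3)$. The size count is routine: the three families $\{\langle(0,1,-s)\rangle\}$, $\{\langle(-s,0,1)\rangle\}$, $\{\langle(1,-s,0)\rangle\}$ each contribute $|Q_0| = \frac{q+1}{2}$ distinct points (within a family, two values $s \neq s'$ give non-proportional vectors), and no point of one family coincides with a point of another, since each family is supported on a distinct coordinate hyperplane with a fixed nonzero entry in a prescribed slot — e.g. a point $\langle(0,1,-s)\rangle$ has first coordinate $0$ and second coordinate nonzero, while $\langle(-s,0,1)\rangle$ has second coordinate $0$, etc. Hence $|\overline{\mathcal{S}}| = 3\cdot\frac{q+1}{2}$.

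For the blocking property, I would take an arbitrary line $\ell$ of $\PG(\F_q^3)$, i.e.\ the projectivization of a $2$-dimensional subspace $W \le \F_q^3$, which is the solution set of a single linear equation $aX + bY + cZ = 0$ with $(a,b,c) \neq (0,0,0)$. I must exhibit a point of $\overline{\mathcal{S}}$ on it. Substituting the three families in turn, membership of $\langle(0,1,-s)\rangle$ on $\ell$ means $b - cs = 0$; of $\langle(-s,0,1)\rangle$ means $-as + c = 0$; of $\langle(1,-s,0)\rangle$ means $a - bs = 0$. So it suffices to show that at least one of the equations $cs = b$, $as = c$, $bs = a$ has a solution $s \in Q_0$. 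If one of $a,b,c$ is zero, the corresponding equation becomes homogeneous in the remaining variables and is solved by $s=0 \in Q_0$ (e.g.\ if $c=0$ then $cs=b$ forces $b=0$ too, handled by another branch; more carefully one checks the cases $a=0$, $b=0$, $c=0$ individually and finds $s=0$ works for at least one family). The substantive case is $a,b,c$ all nonzero: then the three equations have unique solutions $s_1 = b/c$, $s_2 = c/a$, $s_3 = a/b$, and crucially $s_1 s_2 s_3 = \frac{b}{c}\cdot\frac{c}{a}\cdot\frac{a}{b} = 1$, which is a square in $\F_q^\times$. Since the squares form an index-$2$ subgroup, a product of three elements being a square forces at least one of them to be a square (if all three were non-squares, their product would be a non-square). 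Hence some $s_i \in Q \subset Q_0$, giving a point of $\overline{\mathcal{S}}$ on $\ell$.

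The main obstacle — really the only point requiring care — is the bookkeeping in the degenerate cases where one or more of $a,b,c$ vanish, to make sure $s=0$ genuinely lands a point of one of the three families on the line; this is a short finite case check rather than a conceptual difficulty. I would organize it as: if exactly one coordinate of $(a,b,c)$ is zero, say $a = 0$ (the other two cases are symmetric under cyclic permutation of coordinates, which also cyclically permutes the three families), then the equation $bs = a = 0$ gives $s = 0$ and the point $\langle(1,0,0)\rangle \in \overline{\mathcal{S}}$ lies on $\ell$; if two coordinates vanish, say $b = c = 0$, then $\ell: aX = 0$ is the plane $X = 0$, which contains $\langle(0,1,0)\rangle$ from the first family (taking $s=0$), and so on. Assembling these observations completes the proof that $\overline{\mathcal{S}}$ is a blocking set of $\PG(\F_q^3)$ of the stated size.
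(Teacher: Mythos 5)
Your proof is correct. The paper does not prove this proposition at all---it is quoted verbatim from Hirschfeld (Lemma 13.6(i)), so there is no internal argument to compare against; what you have written is precisely the classical ``projective triangle'' argument that the cited source uses: the three incidence conditions for a line $aX+bY+cZ=0$ give candidate parameters $s_1=b/c$, $s_2=c/a$, $s_3=a/b$ with $s_1s_2s_3=1$, and since the non-squares have index $2$ in $\F_q^{\times}$ not all three can be non-squares, so one lies in $Q\subseteq Q_0$; the degenerate lines with a vanishing coefficient are blocked by the $s=0$ points $\langle(1,0,0)\rangle$, $\langle(0,1,0)\rangle$, $\langle(0,0,1)\rangle$. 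Your size count and the cyclic-symmetry reduction of the degenerate cases are both sound, so the proposal is a complete, self-contained proof of the statement the paper merely cites.
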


Consider $\overline{\mathcal{S}}\subseteq \PG(\F_q^3)$ as in \Cref{prop:constructionprojectivetriangle} with $n\geq k+2$. We can embed $\Sigma = \PG(\F_q^3)$ in $\PG(\F_q^n)$ by letting the last coordinates equals to 0. Consider the $\F_q$-subspace $W$ generated by \[
(0,0,0,1,0,\ldots,0),(0,0,0,0,1,0,\ldots,0),\ldots, (0,\ldots,0,\underbrace{1}_{k+2},0,\ldots,0) 
\] 
and let $\Lambda= \PG(W) \subseteq \PG(\F_q^n)$. Clearly, $\Lambda$ is a $(k-2)$-space of $\PG(\F_q^n)$ and $\Sigma \cap \Lambda= \emptyset$. Therefore, the cone with vertex $\Lambda$ and basis $\overline{\mathcal{S}} \subseteq \Sigma$ is the point set

\begin{multline} \label{eq:blockingsetsecond}
\mathcal{S} = \Big\{\langle (0, 1, -s,\alpha_4,\ldots,\alpha_{k+2},0,\ldots,0 ) \rangle, \langle(-s, 0, 1,\alpha_4,\ldots,\alpha_{k+2},0,\ldots,0 )\rangle, \\ \langle (1, -s, 0,\alpha_4,\ldots,\alpha_{k+2},0,\ldots,0 ) \rangle \colon s \in Q_0 \mbox{ and }\alpha_i \in \mathbb{F}_q\Big\}  \cup\  \PG(W),
\end{multline}

and it is a minimal $k$-blocking set of $\PG(\F_q^n)$ having size $\frac{q^{k+1}-1}{q-1}+q^{k-1}\frac{q+1}{2}$.

As a consequence, \Cref{mainresult}, together with the point set $\mathcal{S}$, provides constructions of minimal sets in $\mathcal{T}_{k,q}$ having the second smallest cardinality $\frac{q^{k+1}-1}{q-1}+q^{k-1}\frac{q+1}{2}$, for every $k \geq 2$ and odd prime $q$.

\begin{theorem}
For any odd prime $q$ and $k \geq 2$, the set
\[
\begin{array}{rl}
        S=& \{p_2p_3^{q-s}p_4^{\alpha_4}p_5^{\alpha_5}\cdots p_{k+2}^{\alpha_{k+2}} \colon s \in Q_0, \alpha_i \in \F_q\}    \\
        & \cup \{p_1^{q-s}p_3p_4^{\alpha_4}p_5^{\alpha_5}\cdots p_{k+2}^{\alpha_{k+2}}\colon s \in Q_0, \alpha_i \in \F_q \} \\
        & \cup \{p_1p_2^{q-s}p_4^{\alpha_4}p_5^{\alpha_5}\cdots p_{k+2}^{\alpha_{k+2}} \colon s \in Q_0, \alpha_i \in \F_q\} \\
&\cup \{p_4p_5^{\alpha_5}\cdots p_{k+2}^{\alpha_{k+2}} \colon \alpha_i \in \F_q\} \\
        & \cup \{p_5p_6^{\alpha_6}\cdots p_{k+2}^{\alpha_{k+2}} \colon  \alpha_i \in \F_q\} \\
        & \cdots \\
        & \cup \{p_{k+1}p_{k+2}^{\alpha_{k+2}} \colon  \alpha_i \in \F_q\} \\
        & \cup \{p_{k+2}\},
        \end{array}
\]
where $p_1,\ldots,p_{k+2}$ are $k+2$ distinct primes, is a minimal set in $\mathcal{T}_{k,q}$ not containing a perfect $q^{th}$ power and having size $\frac{q^{k+1}-1}{q-1}+q^{k-1}\frac{q+1}{2}$.
\end{theorem}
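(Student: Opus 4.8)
The plan is to verify that $S$ is precisely $\pi_q^{-1}$ of the point set $\mathcal{S}$ in \eqref{eq:blockingsetsecond}, and then to invoke \Cref{mainresult} together with the fact (already recorded in the excerpt) that $\mathcal{S}$ is a minimal $k$-blocking set of $\PG(\F_q^n)$ of size $\frac{q^{k+1}-1}{q-1}+q^{k-1}\frac{q+1}{2}$. First I would fix $n=k+2$ and the $k+2$ distinct primes $p_1,\ldots,p_{k+2}$, and write out the map $\pi_q$ with respect to this ordered list of primes, so that an integer $\prod_{i=1}^{k+2}p_i^{e_i}$ corresponds to the vector $(e_1,\ldots,e_{k+2})\in\F_q^{k+2}$. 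Then I would go line by line through the definition of $S$: the first three families of elements correspond exactly to the three families of cone points $\langle(0,1,-s,\alpha_4,\ldots,\alpha_{k+2})\rangle$, $\langle(-s,0,1,\alpha_4,\ldots,\alpha_{k+2})\rangle$, $\langle(1,-s,0,\alpha_4,\ldots,\alpha_{k+2})\rangle$ with $s\in Q_0$, once one notes that the exponent $q-s$ is just $-s$ in $\F_q$; and the last $k-1$ families together with $\{p_{k+2}\}$ correspond to the points of $\PG(W)$ (the vertex $\Lambda$), realized as the reduced row-echelon representatives $\langle(0,\ldots,0,1,\alpha_{i+1},\ldots,\alpha_{k+2})\rangle$ for $i=4,\ldots,k+2$ — indeed $\{p_ip_{i+1}^{\alpha_{i+1}}\cdots p_{k+2}^{\alpha_{k+2}}:\alpha_j\in\F_q\}$ enumerates exactly the $\PG(W)$-points whose leading nonzero coordinate is in position $i$. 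This establishes $\pi_q(S)$ (as a set of projective points) equals $\mathcal{S}$.

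Next I would check the two side conditions in the statement. That $S$ contains no perfect $q$th power: a perfect $q$th power among positive $q$-free integers is $1$, i.e. the zero vector, and every vector listed has a coordinate equal to $1$ (the leading coordinate of an echelon representative of a $\PG(W)$-point, or the coordinate in position $2$, $3$, or $1$ of a cone point that is not forced to vanish — e.g. $\langle(0,1,-s,\ldots)\rangle$ has a $1$ in slot $2$), so $0\notin\pi_q(S)$, hence $1\notin S$. That $n$ really is the number of distinct primes dividing $\prod_{s\in S}\rad(|s|)$: one checks each of $p_1,\ldots,p_{k+2}$ actually occurs with positive exponent in some element of $S$ — $p_1$ appears (e.g. in $p_1p_2^{q-s}\cdots$ with $s$ a nonzero square so $q-s\ne 0$, or simply in the $\alpha_i$-free third family taking all $\alpha_i=0$... here I should be slightly careful and pick $s=0\in Q_0$, giving $p_1$ times a product of the $p_i$'s), $p_2,p_3$ similarly, and $p_i$ for $i\ge 4$ appears as the leading prime of the family $\{p_ip_{i+1}^{\alpha_{i+1}}\cdots\}$ — so no coordinate is identically zero on $S$ and $n=k+2$ is correct. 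Then \Cref{mainresult} gives $S\in\mathcal{T}_{k,q}$ iff $\mathcal{S}$ is a $k$-blocking set of $\PG(\F_q^{k+2})$, which it is; and minimality of $S$ transfers from minimality of $\mathcal{S}$ since, as noted in the proof of the preceding proposition, for a minimal-in-$\mathcal{T}_{k,q}$ set $\pi_q$ is injective on $S$ and deleting an element of $S$ deletes the corresponding point of $\mathcal{S}$, so a proper subset of $S$ lying in $\mathcal{T}_{k,q}$ would yield a proper sub-$k$-blocking-set of $\mathcal{S}$, contradiction. Finally the cardinality is read off: $\pi_q$ being injective, $|S|=|\mathcal{S}|=\frac{q^{k+1}-1}{q-1}+q^{k-1}\frac{q+1}{2}$.

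I do not expect a serious obstacle here — the theorem is essentially a bookkeeping translation of the construction of $\mathcal{S}$ in \eqref{eq:blockingsetsecond} through the dictionary $\pi_q$. The one place that needs genuine (if mild) care is the claim that the listed families enumerate the points of $\mathcal{S}$ \emph{without repetition} and \emph{exhaustively}: for the cone points one must observe that the three families $\langle(0,1,-s,\ast)\rangle$, $\langle(-s,0,1,\ast)\rangle$, $\langle(1,-s,0,\ast)\rangle$ are pairwise disjoint (their patterns of leading coordinates differ) and that the exponent reduction mod $q$ does not collapse distinct $s\in Q_0$ (it doesn't, since $Q_0\subseteq\{0,1,\ldots,q-1\}$); and for the vertex $\PG(W)$ one must observe that every point has a unique reduced row-echelon representative, whose leading $1$ sits in exactly one of the positions $4,\ldots,k+2$, matching exactly one of the $k-1$ families (plus the singleton $\{p_{k+2}\}$ for the position-$(k+2)$ point). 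Once this is laid out, everything else is an immediate appeal to \Cref{mainresult}, \Cref{th:secondsmallest}, and the construction of $\mathcal{S}$.
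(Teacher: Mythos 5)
Your proposal is correct and takes exactly the route the paper intends: the paper gives no separate proof of this theorem, treating it as the immediate translation via $\pi_q$ of the cone construction in \eqref{eq:blockingsetsecond} combined with Theorem \ref{mainresult} and Proposition \ref{th:secondsmallest}, and your argument supplies precisely that bookkeeping (the identification of the first three families with the cone over the projective triangle and of the last $k-1$ families plus $\{p_{k+2}\}$ with $\PG(W)$, plus the injectivity and cardinality checks). The only cosmetic remark is that for $s=0$ the literal exponent $q-s=q$ makes those integers non-$q$-free, which is harmless since $\pi_q$ factors through $\mathrm{rad}_q$, as your argument implicitly uses.
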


\section*{Acknowledgement}\footnotesize
The authors are grateful to the reviewer for their invaluable comments and suggestions, which enhanced the exposition in this article. The second author research was partially supported by the Italian National Group for Algebraic and Geometric Structures and their Applications (GNSAGA - INdAM) and by the INdAM - GNSAGA Project \emph{Tensors over finite fields and their applications}, number E53C23001670001 and by Bando Galileo 2024 – G24-216.

\bibliographystyle{abbrv}
\bibliography{blocking}

\begin{thebibliography}{10}

\bibitem{blokhuis2011blocking}
A.~Blokhuis, P.~Sziklai, and T.~Szonyi.
\newblock Blocking sets in projective spaces.
\newblock {\em Current research topics in Galois geometry}, pages 61--84, 2011.

\bibitem{bose1966characterization}
R.~C. Bose and R.~Burton.
\newblock A characterization of flat spaces in a finite geometry and the uniqueness of the hamming and the macdonald codes.
\newblock {\em Journal of Combinatorial Theory}, 1(1):96--104, 1966.

\bibitem{filaseta1989sets}
M.~Filaseta and D.~Richman.
\newblock Sets which contain a quadratic residue modulo p for almost all p.
\newblock {\em Math. J. Okayama Univ}, 39:1--8, 1989.

\bibitem{fried1969arithmetical}
M.~Fried.
\newblock Arithmetical properties of value sets of polynomials.
\newblock {\em Acta Arithmetica}, 15(2):91--115, 1969.

\bibitem{Gou}
F.~Q. Gouv\^ea.
\newblock {\em p-adic Numbers, An Introduction}.
\newblock Cham, Switzerland: Springer Nature, 3rd edition, 2020.

\bibitem{heim1996blockierende}
U.~Heim.
\newblock Blockierende mengen in endlichen projektiven r{\"a}umen.
\newblock {\em Dissertation, Justus-Liebig-Universit{\"a}t Giessen, Giessen}, 1996.

\bibitem{hirschfeld1998projective}
J.~Hirschfeld.
\newblock {\em Projective geometries over finite fields. Oxford mathematical monographs}.
\newblock Oxford University Press New York, 1998.

\bibitem{mishra2022polynomials}
B.~Mishra.
\newblock Polynomials over ring of integers of global fields that have roots modulo every finite indexed subgroup.
\newblock {\em Journal of Algebra}, 608:239--258, 2022.

\bibitem{mishra2023prime}
B.~Mishra.
\newblock Prime power residue and linear coverings of vector space over $\fq$.
\newblock {\em Finite Fields and Their Applications}, 89:102199, 2023.

\bibitem{mishra2024polynomials}
B.~Mishra.
\newblock Polynomials with factors of the form $(x^q-a)$ with roots modulo every integer.
\newblock {\em Communications in Algebra}, pages 1--9, 2024.

\bibitem{mishra2024generalization}
B.~Mishra.
\newblock A generalization of the {Grunwald}-{Wang} theorem for {{\(n\)}}th powers.
\newblock {\em Int. J. Number Theory}, 21(2):377--389, 2025.

\bibitem{Narkiewicz}
W.~Narkiewicz.
\newblock {\em Elementary and analytic theory of algebraic numbers}.
\newblock Springer Monogr. Math. Berlin: Springer, 3rd edition, 2004.

\bibitem{SS}
A.~Schinzel and M.~Skałba.
\newblock On power residues.
\newblock {\em Acta Arithmetica}, 108(1):77--94, 2003.

\bibitem{skalba2004alternatives}
M.~Ska{\l}ba.
\newblock On alternatives of polynomial congruences.
\newblock {\em Bull. Polish Acad. Sci. Math}, 52:123--132, 2004.

\bibitem{skalba2004power}
M.~Ska{\l}ba.
\newblock Power residue problem on elliptic curves.
\newblock {\em Manuscripta Mathematica}, 114:37--43, 2004.

\bibitem{skalba2005sets}
M.~Skałba.
\newblock On sets which contain a $ q^{th}$ power residue for almost all prime modules.
\newblock {\em Colloquium Mathematicum}, 102:67--71, 2005.

\bibitem{wong2000power}
S.~Wong.
\newblock Power residues on abelian varieties.
\newblock {\em Manuscripta Mathematica}, 102:129--137, 2000.

\end{thebibliography}

\medskip

Bhawesh Mishra \\
Department of Mathematical Sciences, \\
384 Dunn Hall, University of Memphis,\\
Memphis, TN 38107, USA \\
{{\em bmishra1@memphis.edu}}

\medskip

Paolo Santonastaso\\
Dipartimento di Matematica e Fisica,\\
Universit\`a degli Studi della Campania ``Luigi Vanvitelli'',\\
I--\,81100 Caserta, Italy\\
{{\em paolo.santonastaso@unicampania.it}}\\
Dipartimento di Meccanica, Matematica e Management, \\
Politecnico di Bari, \\
Via Orabona 4, \\
70125 Bari, Italy \\
{{\em paolo.santonastaso@poliba.it}}

\end{document}